\documentclass[12pt]{article}
\usepackage[english]{babel}
\usepackage[utf8]{inputenc}
\usepackage{amstext}
\usepackage{fancyhdr}
\usepackage{amsfonts,graphicx,bezier, amssymb}
\usepackage{amsmath}
\usepackage{caption}
\usepackage{mathtools}
\usepackage{tikz}
\usepackage{rotating}
\usepackage[numbers]{natbib}
\usetikzlibrary{arrows.meta}
\newenvironment{prf}{\noindent{\bf{Proof:}}~~}{\hfill\rule{1ex}{1ex}\vskip1.5ex}

\newcommand{\Z}{\mathbb Z}

\newcommand{\beqa}{\begin{eqnarray}}
\newcommand{\enqa}{\end{eqnarray}}
\newcommand{\beq}{\begin{eqnarray*}}
\newcommand{\enq}{\end{eqnarray*}}

\newcommand{\noi}{\noindent}

\newtheorem{qn}{Question}[section]
\newtheorem{rem}{Remark}[section]

\newtheorem{cor}{Corollary}[section]
\newtheorem{propn}{Proposition}[section]
\newtheorem{defn}{Definition}[section]
\newtheorem{exam}{{\bf Example}}[section]
\newtheorem{thm}{Theorem}[section]

\makeatletter
\providecommand*{\twoheadrightarrowfill@}{%
  \arrowfill@\relbar\relbar\twoheadrightarrow
}
\providecommand*{\twoheadleftarrowfill@}{%
  \arrowfill@\twoheadleftarrow\relbar\relbar
}
\providecommand*{\xtwoheadrightarrow}[2][]{%
  \ext@arrow 0579\twoheadrightarrowfill@{#1}{#2}%
}
\providecommand*{\xtwoheadleftarrow}[2][]{%
  \ext@arrow 5097\twoheadleftarrowfill@{#1}{#2}%
}
\makeatother

\begin{document}
\begin{center}
		{\bf\Large Locally prime modules}

		\end{center}

		 \vspace*{0.2cm}

\begin{center}Sholastica Luambano\footnote{Department of Mathematics, University of Dodoma, P.O. BOX 259, Dodoma, Tanzania} and David Ssevviiri\footnote{Department of Mathematics, Makerere University, P.O. BOX 7062, Kampala, Uganda}$^{,}$\footnote{Corresponding author}  \\
			E-mail:   scholastica.luambano@udom.ac.tz and david.ssevviiri@mak.ac.ug

		\end{center}

	\vspace*{0.1cm}
\begin{abstract} For a commutative unital ring  $R$ with fixed ideals  $I$ and $J$, we introduce and study $I$-prime $R$-modules and  $(I, J)$-prime $R$-modules together with  their duals  $I$-coprime  $R$-modules and  $(I,J)$-coprime $R$-modules respectively. We employ category-theoretic techniques to reveal their  structural properties. Our main results are versions of   the Greenlees-May Duality and the Matlis-Greenlees-May Equivalence to the setting of these prime  and coprime modules. This   generalizes  work on $I$-reduced modules and  $I$-coreduced modules. We  demonstrate that these ``locally prime" modules serve as a  tool for studying the   classical ``globally prime" modules, creating a bridge between local and global primality.
	\end{abstract}

	{\bf Keywords}:  Prime modules, torsion and completion, Greenlees-May Duality and Matlis-Greenlees-May Equivalence

	\vspace*{0.3cm}

{\bf MSC 2020} Mathematics Subject Classification: 13C13, 18A40, 16N60, 13J10

 \section{Introduction}
\begin{paragraph}\noi
 Prime ideals of rings are important in commutative algebra, in algebraic number theory  and  in algebraic geometry. The Krull dimension of a ring which  is the supremum of chains of prime ideals of that ring forms an important invariant. The Krull dimension of a coordinate ring coincides with  the dimension of the associated variety. The collection of all prime ideals  of a ring $R$,  called the {\it spectrum of  $R$} and  denoted by $\text{Spec}(R)$, is a geometric scheme which is in a one-to-one correspondence with the unital commutative ring $R$. Matlis' Theorem states that if $R$ is a commutative Noetherian ring, then there is a bijection between the prime ideals $\mathcal{P}$ of $R$ and the indecomposable injective $R$-modules of the form $E(R/\mathcal{P})$, where $E(R/\mathcal{P})$ denotes the injective hull of the $R$-module $R/\mathcal{P}$.  In number theory, prime ideals are useful  because they generalize prime numbers; enabling the unique factorization of ideals in Dedekind domains where unique factorization of elements fails. This  provides a framework for analyzing and solving problems; such as understanding the structure of number fields and the behavior of primes in extensions.  
 The utility of prime ideals  motivated their generalization to modules defined over rings \cite{BehboodiWP2004,Dauns1978} as well as to other algebraic structures such as near-rings \cite{Groenewald1991} and recently to skew braces \cite{Konovalov2021}. Throughout the paper,  $R$ will be a commutative unital ring and modules  $M$ will be left, unital modules defined over $R$. We denote the category of $R$-modules by $R$-Mod.
\end{paragraph}

 \subsection{Globally  prime modules}\label{1.1}
 \begin{paragraph}\noindent  A module $M$ over a ring $R$ is $\textit{prime}$ if for all ideals $I$ of $R$ and  submodules $N$ of $M$, $IN=0$ implies that either $N=0$ or $IM=0$. A submodule $P$ of  an $R$-module $M$ is a \textit{prime submodule}  if $M/P$ is a prime $R$-module. The study of prime modules was initiated  by Dauns \cite{Dauns1978} in 1978 and became so active in the last four decades, see for instance \cite{AnsariToroghy2011,Azizi2008,Azizi2009, Behbood2004,Behboodi2007, Behboodi2009,BehboodiWP2004,Tiras1999, Yassemi2001} and the references therein. There is a generalization of prime modules called classical prime modules \cite{Behboodi2007} and also called weakly prime modules in the literature, see  \cite{Azizi2008, Azizi2009, BehboodiWP2004}. An $R$-module $M$ is \textit{weakly prime} if for all ideals  $I$ and $J$ of $R$ and  elements $m\in M$, $IJm=0$ implies that either $Im=0$ or $Jm=0$. A submodule $P$ of  an $R$-module $M$ is a \textit{weakly prime submodule} if $M/P$ is a weakly prime $R$-module.
 \end{paragraph}

 \begin{paragraph}\noi
 We  refer to the two aforementioned definitions of prime modules as the global notions of prime modules. The word ``global'' is meant to signify the fact that in these definitions, we use any arbitrary ideals as opposed to fixing an ideal or fixing a given pair of ideals as we do later for their ``local'' counterparts.     An $R$-module $M$ is {\it reduced} \cite{Lee2004} (also called semiprime in \cite{SsevviiriGroenewald2014}) if for all ideals $I$ in $R$ and  elements $m\in M$, $I^2m=0$ implies that $Im=0$. $R$ is a prime ring (i.e., an integral domain) if and only if  $_RR$ is a prime module if and only if $_RR$ is a weakly prime module. $R$ is a reduced ring if and only if $_RR$ is a reduced module.
\end{paragraph}

  \subsection{Locally  prime modules}\label{1.2}

  \begin{paragraph}\noindent
 We now define the local versions of: 1) reduced modules, 2) weakly prime modules and 3) prime modules. The nomenclature is partly motivated by how local cohomology modules are defined, i.e., always defined with respect to a given ideal. These local notions permit the  study of the category theoretic properties of prime modules since each of the ``new primes'' is characterized in terms of functors as we demonstrate later. \end{paragraph}

 \begin{defn}\label{locally}\rm
 Let $I$ and $J$ be fixed ideals of a ring $R$. An $R$-module $M$ is:
  \begin{enumerate}
  	\item $I$-{\it reduced} \cite{SsevviiriI2024} if for all $m\in M$, $I^2m=0$ implies that $Im=0$;
  	\item $I$-\textit{prime} if for all $m \in M$, $Im=0$ implies that either $m=0$ or $IM=0$;
  	\item $(I,J)$-$\textit{prime}$ if for all elements $m\in M$, $IJm=0$
 implies that either $Im=0$ or $Jm=0$.
   \end{enumerate}
\end{defn}

\begin{defn}\label{locally submodules}\rm

A submodule $P$ of an $R$-module $M$ is {\it $I$-semiprime} (resp. {\it $I$-prime} and {\it  $(I,J)$-prime}) if the $R$-module $M/P$ is  $I$-reduced (resp. $I$-prime and $(I,J)$-prime).
	\end{defn}
\begin{paragraph}\noi
    Accordingly, an $R$-module is prime (resp. weakly prime and reduced) if it is $I$-prime (resp. $(I,J)$-prime and $I$-reduced) for all ideals $I$ and $J$ of $R$.
\end{paragraph}
\subsection{Torsion and Completion}
  \begin{paragraph}\noi
Although the  $I$-torsion functor  $\Gamma_I$ and its formal dual the  $I$-adic completion functor $\Lambda_I$ are generally not adjoint on the category of $R$-modules, their derived counterparts $R\Gamma_I$ and $L\Lambda_I$  become adjoint in the derived category for any weakly proregular ideal $I$.  This result, known as the Greenlees-May (GM) Duality \cite{Alonso1997, Porta2014}, constitutes a profound generalization and strengthening of Grothendieck's local duality. Unlike its predecessor, GM Duality holds for any ideal in a Noetherian ring and provides a more conceptual framework; see \cite{Faridian2019} for a comprehensive treatment. An $R$-module $M$ is {\it $I$-torsion} (resp. {\it $I$-adically complete}) if $\Gamma_I(M)=M$ (resp. $\Lambda_I(M)\cong M$).  It is well known that if $R$ is both a Noetherian and an $I$-adically complete ring, then every $R$-module is $I$-adically complete. An $R$-module $M$ for which $IM=0$ is both $I$-torsion and $I$-complete.   $I$-torsion $R$-modules form an abelian category but $I$-adically complete $R$-modules do not form an abelian category in general.  The Matlis-Greenlees-May (MGM) Equivalence asks for when there is  an equivalence of these two subcategories of $R$-modules or of their corresponding derived analogues. Just like the GM-Duality, when the ideal $I$ is weakly proregular,  the MGM  Equivalence was established in the derived category setting  in \cite{Alonso1997} and \cite{Porta2014}.
Recently,  both the GM Duality and the MGM Equivalence were established in the context of $I$-reduced $R$-modules and $I$-coreduced $R$-modules  in  \cite{SsevviiriI2024} and in the derived category setting of $I$-reduced and $I$-coreduced complexes in \cite{Abebaw2025}. In these two cases, the ideal $I$ need not be weakly proregular.
\end{paragraph}

\subsection{Summary of results}
\begin{paragraph}\noi In this paper,  we show that the GM duality  and the MGM equivalence restrict to the subcategories of 
$I$-prime $R$-modules and $I$-coprime $R$-modules, see Theorems \ref{GM} and \ref{MGM} respectively. More properties about these locally prime modules are established. Whereas $I$-prime $R$-modules and $(I,J)$-prime $R$-modules are closed under taking submodules, Proposition \ref{tksubmodules}; their duals $I$-coprime $R$-modules and $(I, J)$-coprime $R$-modules are closed under taking homomorphic images, Proposition \ref{tkimages}.  All locally prime modules and their duals are closed under localization, Proposition \ref{tklocalisations}. We give conditions under which the locally prime modules and their duals are preserved or reversed by taking the Hom functor, Propositions \ref{Hom-reversed1} and \ref{Hom-reversd2}.  In Proposition \ref{Injcogenerator} (resp. Proposition \ref{progenerator}), we give conditions under which $(I,J)$-prime $R$-modules become $(I,J)$-coprime (resp. $(I,J)$-coprime $R$-modules become $(I,J)$-prime  $R$-modules).
\end{paragraph}

\subsection{Rationale for the GM Duality in the context of modules}

\begin{paragraph}\noi 

We have established module subcategories where the $I$-adic completion functor $\Lambda_I$ is the left adjoint of the $I$-torsion functor   $\Gamma_I$. For any $R$-module $M$ which is $I$-coprime  and any $R$-module $N$ which is $I$-prime, the  adjunction   

\begin{equation}\label{ad}
 \text{Hom}_R(\Lambda_I(M), N)\cong \text{Hom}_R(M, \Gamma_I(N))   
\end{equation}

gives a formal categorical bridge between local information of a module around the ideal   $I$  as described in terms of the $I$-torsion functor and the formal completion of a module in its infinitesimal neighborhood of $I$  as described by the $I$-adic completion functor. Furthermore, a morphism $M\rightarrow \Gamma_I(N)$ of $I$-prime $R$-modules defined ``locally" in the support of $I$ corresponds uniquely to a morphism $\Lambda_I(M)\rightarrow N$  of $I$-coprime  $R$-modules defined ``formally" in the neighborhood of  $I$.  This symmetry is particularly noteworthy because, while the $I$-torsion functor $\Gamma_I$ is always left exact, the $I$-adic completion functor $\Lambda_I$ is generally neither left nor right exact. However, on the specified subcategories where the adjunction holds, $\Lambda_I$ becomes right exact. Thus, the adjunction in \eqref{ad} reveals a precise duality between these functors; a symmetry akin to what was previously only visible in the derived category.
\end{paragraph}

\subsection{Structure of the paper}
\begin{paragraph}\noi
The paper consists of five sections. In Section $1$, we introduce $I$-prime modules and $(I,J)$-prime modules. In  Section $2$,  we give properties of $I$-prime modules and $(I,J)$-prime modules which are needed later in the paper. In Section $3$, we introduce the dual notions of  $I$-prime modules and $(I,J)$-prime modules, namely; the  $I$-coprime modules and $(I,J)$-coprime modules respectively and give their properties. In Section $4$,  we give the main results of the paper, i.e.,  Theorems \ref{GM} and \ref{MGM}. In Section  $5$, we give more functorial properties of the $I$-prime modules and $(I,J)$-prime modules as well as their corresponding dual notions. In the same section, we give properties on the globally prime modules which are obtained by the use of locally prime modules, see Corollary \ref{did}. In the conclusion, we give the different studies this paper could lead to.
\end{paragraph}

\section{$I$-prime modules $\&$ $(I, J)$-prime modules}

      \subsection{$I$-prime $R$-modules}
\begin{paragraph}\noi
    Let $I$ be an ideal of a ring $R$. For an $R$-module $M$ and a submodule $N$ of $M$, we write $(N:_M I)$ to denote the submodule $\left\{m\in M~|~Im\subseteq N\right\}$ of $M$.
\end{paragraph}
  \begin{propn}\label{Iprimep1}\rm 

  Let $I$ be a fixed ideal of $R$ and $M$ be an $R$-module. The following statements are equivalent:

  	\begin{enumerate}
  		\item $M$ is $I$-prime;
        \item for every submodule $N$ of $M$, $IN=0$ implies that either $N=0$ or $IM=0$;
  		\item either $(0:_M I)=0$ or $(0:_M I)=M$, i.e., $(0:_M I)$ is a trivial submodule of $M$;
        \item $\text{Hom}_R(R/I,M)\cong 0$ or $\text{Hom}_R(R/I,M)\cong M$.
  		
  		\end{enumerate}

  	  \end{propn}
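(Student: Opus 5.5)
The plan is to prove the cycle $1\Rightarrow 2\Rightarrow 3\Rightarrow 1$ directly from Definition \ref{locally}, and then to handle $3\Leftrightarrow 4$ through the standard identification of $\text{Hom}_R(R/I,M)$ with the annihilator submodule $(0:_M I)$.

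For $1\Rightarrow 2$, I take a submodule $N$ with $IN=0$ and $N\neq 0$. I pick a nonzero $m\in N$. Then $Im\subseteq IN=0$, so $I$-primeness of $M$ forces $IM=0$ (since $m\neq 0$).

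For $2\Rightarrow 3$, I apply statement 2 to $N=(0:_M I)$. By definition this submodule satisfies $IN=0$. Hence either $N=0$, or $IM=0$; in the latter case every element of $M$ is annihilated by $I$, so $N=M$.

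For $3\Rightarrow 1$, suppose $Im=0$, so that $m\in(0:_M I)$. If $(0:_M I)=0$, then $m=0$. If $(0:_M I)=M$, then $Im'=0$ for every $m'\in M$, so $IM=0$.

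For $3\Leftrightarrow 4$, I use the natural $R$-linear map
$$\text{Hom}_R(R/I,M)\longrightarrow (0:_M I),\qquad f\mapsto f(1+I).$$
It lands in $(0:_M I)$ because $I\cdot f(1+I)=f(I/I)=0$. It is injective because $R/I$ is cyclic, generated by $1+I$. It is surjective because any $m$ with $Im=0$ defines $f(r+I)=rm$, which is well defined precisely because $Im=0$. Thus $\text{Hom}_R(R/I,M)\cong(0:_M I)$, and statement 4 becomes a restatement of statement 3.

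The main obstacle is interpreting ``$\text{Hom}_R(R/I,M)\cong M$'' in statement 4. An abstract isomorphism $(0:_M I)\cong M$ need not force $(0:_M I)=M$, so for $4\Rightarrow 3$ I will read statement 4 as saying that this identification, composed with the inclusion $(0:_M I)\hookrightarrow M$ (that is, evaluation at $1+I$), is an isomorphism. Under this reading the inclusion is surjective, so $(0:_M I)=M$; and $\text{Hom}_R(R/I,M)\cong 0$ clearly gives $(0:_M I)=0$. The converse $3\Rightarrow 4$ is immediate under either reading. I will state this natural-map interpretation explicitly in the proof.
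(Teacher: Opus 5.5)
Your cycle $1\Rightarrow 2\Rightarrow 3\Rightarrow 1$ and the identification $\text{Hom}_R(R/I,M)\cong(0:_M I)$ via $f\mapsto f(1+I)$ are exactly the paper's argument, and they are correct. The problem is how you handle statement 4. You claim that an abstract isomorphism $(0:_M I)\cong M$ need not force $(0:_M I)=M$. On that basis you replace statement 4 by a stronger assertion: that evaluation at $1+I$ is an isomorphism onto $M$. With that replacement you only prove the modified statement implies 3, so the stated implication $4\Rightarrow 3$ is left unproved.

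Moreover, the claim is false for this particular submodule. A proper submodule can in general be isomorphic to the whole module (e.g.\ $2\mathbb{Z}\cong\mathbb{Z}$). Here, however, $(0:_M I)$ is annihilated by $I$, and annihilators are invariant under isomorphism. Let $\phi\colon (0:_M I)\to M$ be any $R$-isomorphism and $m\in M$. Then $m=\phi(n)$ for some $n$ with $In=0$, so $Im=\phi(In)=0$. Hence $IM=0$, i.e.\ $(0:_M I)=M$. Equivalently, $\text{Hom}_R(R/I,M)$ is an $R/I$-module, so $\text{Hom}_R(R/I,M)\cong M$ as $R$-modules already forces $IM=0$.

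So drop the reinterpretation and insert this two-line annihilator argument. Then $4\Rightarrow 3$ holds for the literal, abstract-isomorphism reading. The paper's one-line justification of $3\Leftrightarrow 4$ tacitly relies on this same observation; you were right that something needs saying there, but the fix is this argument, not a change of statement.
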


  \begin{prf}
\noindent
$(1)\Rightarrow(2)$:
Assume that $M$ is $I$-prime and let $N\leq M$ satisfy $IN=0$.
If $N\neq 0$, choose $0\neq m\in N$. Then $Im=0$, and since $M$ is
$I$-prime, we have $m=0 \quad \text{or} \quad IM=0$. Since $m\neq 0$, it follows that $IM=0$. Hence $N=0$ or $IM=0$.     
$(2)\Rightarrow(3)$: Let $N=(0:_M I)$. By definition, $IN=0$. By $(2)$, either $N=0$ or $IM=0$. If $IM=0$, then every element of $M$
is annihilated by $I$, hence $N=M$. Therefore, $ (0:_M I)=0 \quad \text{or} \quad (0:_M I)=M$. 
$(3)\Rightarrow(1)$: Assume that $(0:_M I)=0$ or $(0:_M I)=M$. Let $m\in M$ with $Im=0$. Then $m\in(0:_M I)$.  
If $(0:_M I)=0$, then $m=0$. If $(0:_M I)=M$, then $IM=0$. Hence $M$ is $I$-prime.  $(3)\Leftrightarrow (4)$: Follows from  the fact that $\text{Hom}_R(R/I,M)\cong (0:_M I).$   		
  	\end{prf}

    \begin{cor}\rm  An $R$-module $M$ is a prime $R$-module if and only if for every ideal $I$ of $R$, the submodule $(0:_MI)$ of $M$ is a trivial submodule. 
    \end{cor}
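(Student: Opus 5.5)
The corollary follows by combining the global–local dictionary of Section \ref{1.2} with Proposition \ref{Iprimep1}, applied one ideal at a time. As noted after Definition \ref{locally submodules}, an $R$-module is prime exactly when it is $I$-prime for every ideal $I$ of $R$. So the plan is to check this remark carefully against the original definition of a prime module, and then quote the equivalence $(1)\Leftrightarrow(3)$ of Proposition \ref{Iprimep1} for each $I$.

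First I verify that ``prime'' coincides with ``$I$-prime for all $I$''. The definition of a prime module in Subsection \ref{1.1} says: for every ideal $I$ and every submodule $N\leq M$, $IN=0$ implies $N=0$ or $IM=0$. For a fixed $I$, this is precisely condition (2) of Proposition \ref{Iprimep1}. By $(1)\Leftrightarrow(2)$ of that proposition, condition (2) is equivalent to $M$ being $I$-prime. Quantifying over all $I$, $M$ is prime if and only if $M$ is $I$-prime for every ideal $I$. This route goes through (2) directly, so I need not rely on the informal remark about local versus global.

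Second, for each fixed $I$, I apply $(1)\Leftrightarrow(3)$ of Proposition \ref{Iprimep1}: $M$ is $I$-prime if and only if $(0:_MI)$ is either $0$ or $M$, that is, a trivial submodule. Taking the conjunction over all ideals $I$ gives the stated equivalence, in both directions.

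There is no real obstacle. The only point needing care is that the global definition quantifies over submodules $N$, while the local Definition \ref{locally} quantifies over elements $m$. That mismatch is exactly what the implication $(2)\Rightarrow(1)$ in Proposition \ref{Iprimep1} resolves, which is why the argument routes through condition (2) rather than comparing definitions directly.
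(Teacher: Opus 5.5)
Your proof is correct and follows the route the paper intends. The paper states this corollary without proof right after Proposition \ref{Iprimep1}; it follows by applying that proposition's equivalences, one ideal $I$ at a time, together with the fact that prime means $I$-prime for every $I$. Your observation that the global definition is literally condition (2) for each fixed $I$ is a nice touch, since it makes the argument independent of the paper's informal local--global remark.
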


\begin{paragraph}\noi
    The $I$-torsion functor $\Gamma_I$ associates to each $R$-module $M$, a submodule

  \begin{equation}\label{I-torsion1}
\Gamma_I(M):= \left\{ m\in M~ |~   I^km=0 ~\text{for some}~k\in \Z^+\right\}=\bigcup_{k\in \Z^+} \left(0:_M I^k\right).
\end{equation}
\end{paragraph}

  \begin{cor}\label{Iprmc1}\rm
 	If $M$ is an $I$-prime $R$-module and $(0:_M I)\neq 0$, then $M$ is $I$-torsion.
 \end{cor}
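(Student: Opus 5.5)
The plan is to apply Proposition \ref{Iprimep1} directly, so that the statement becomes an immediate consequence of the trivial-annihilator characterization together with the description \eqref{I-torsion1} of $\Gamma_I(M)$.

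First, since $M$ is $I$-prime, condition (3) of Proposition \ref{Iprimep1} says that $(0:_M I)$ is a trivial submodule: either $(0:_M I)=0$ or $(0:_M I)=M$. The hypothesis $(0:_M I)\neq 0$ excludes the first case, so we are left with $(0:_M I)=M$. This means $IM=0$.

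Next we show $\Gamma_I(M)=M$. From \eqref{I-torsion1} we have $(0:_M I)\subseteq \bigcup_{k\in\Z^+}(0:_M I^k)=\Gamma_I(M)\subseteq M$, where we take $k=1$ in the union. Combining this with $(0:_M I)=M$ forces $\Gamma_I(M)=M$. Hence $M$ is $I$-torsion.

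An equivalent phrasing is that every $m\in M$ satisfies $Im=0$, so $I^1m=0$ and therefore $m\in\Gamma_I(M)$. This also agrees with the earlier remark in the paper that any module with $IM=0$ is $I$-torsion. The argument has no real obstacle; the only point needing care is to use condition (3) of Proposition \ref{Iprimep1} rather than the elementwise definition, since the hypothesis is stated about the submodule $(0:_M I)$.
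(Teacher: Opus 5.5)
Your proof is correct and takes essentially the same route as the paper. Both arguments use Proposition~\ref{Iprimep1} to force $(0:_M I)=M$, and then squeeze $\Gamma_I(M)$ between $(0:_M I)$ and $M$ to conclude $\Gamma_I(M)=M$.
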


 \begin{prf}
 	$(0:_M I)\subseteq\varGamma_I (M)\subseteq M$.
 	If $M$ is $I$-prime and $(0:_M I)\neq 0$, then
 	$(0:_M I)=M$. So, $(0:_M I)=\varGamma_I (M)= M$ and $M$ is $I$-torsion.
 \end{prf}

 	\begin{exam}\rm
 		For any ring $R$, every  $R$-module is trivially  $0$-prime, and vacuously  $R$-prime since  $Rm=M$ for all  $m\in M$.
 	\end{exam}
\begin{paragraph}\noi
For an $R$-module $M$, we write $(0:_R M)$ to denote the ideal of $R$ given by $\left \{r\in R~ |~ rM=0 \right\}$.
\end{paragraph}
 \begin{propn}\label{anhltr prime}\rm Any $R$-module $M$ is $(0:_R M)$-prime.
 \end{propn}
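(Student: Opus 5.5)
Write $I=(0:_R M)$; the plan is to verify the definition of $I$-prime directly. By the definition of the annihilator, $rM=0$ for every $r\in I$, so $IM=0$. This holds for every module $M$, with no assumption on $M$ or $R$.

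Now take any $m\in M$ with $Im=0$. The definition of $I$-prime requires that $m=0$ or $IM=0$, and the second alternative already holds. So $M$ is $I$-prime.

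Alternatively, one can use Proposition \ref{Iprimep1}(3). Since $IM=0$, every element of $M$ is killed by $I$, so $(0:_M I)=M$, which is a trivial submodule. Both routes are immediate.

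There is no real obstacle. The only point is that the conclusion $IM=0$ holds unconditionally here, so the hypothesis $Im=0$ is never actually used. For the same reason every module is $I$-prime for any ideal $I\subseteq (0:_R M)$, and the same argument applies in that case.
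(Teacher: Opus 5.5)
Your proof is correct and is essentially the paper's: the paper notes $(0:_R M)M=0$, hence $(0:_M(0:_R M))=M$, and concludes via Proposition \ref{Iprimep1}. That is your second route, and your direct check against the definition is the same observation without the intermediate step.
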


 \begin{prf}
 Since	$(0:_R M)M=0$, $(0:_M (0:_R M))=M$. Then by Proposition \ref{Iprimep1}, $M$ is $(0:_R M)$-prime.
 \end{prf}

 \begin{cor}\rm For any ideal $I$ of a ring $R$ and for any submodule $P$ of an $R$-module $M$, we have:

 \begin{enumerate}
  \item $R/I$ is an $I$-prime $R$-module,
  \item $M/P$ is a $(P:_R M)$-prime $R$-module.
  
   \end{enumerate}
 \end{cor}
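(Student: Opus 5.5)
Both parts follow by reducing to Proposition \ref{anhltr prime}, which says that every $R$-module $M$ is $(0:_R M)$-prime. In each case I will compute the annihilator of the relevant module, check that it is the stated ideal, and apply that proposition. As a sanity check, I will also verify the condition in Proposition \ref{Iprimep1}(3) directly.

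For (1), take the $R$-module $R/I$ and compute $(0:_R R/I)=\{r\in R : r(R/I)=0\}$. Since $R$ is unital, $r\cdot(1+I)=r+I$. This vanishes for all cosets exactly when $r\in I$, because $r(s+I)=rs+I$ lies in $I$ whenever $r\in I$. So $(0:_R R/I)=I$, and Proposition \ref{anhltr prime} shows that $R/I$ is $I$-prime. Equivalently, $I(R/I)=0$ gives $(0:_{R/I} I)=R/I$, which is a trivial submodule, so Proposition \ref{Iprimep1} applies.

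For (2), recall the colon ideal $(P:_R M)=\{r\in R : rM\subseteq P\}$; this is the natural meaning in this notation. The key observation is that $(P:_R M)=(0:_R M/P)$. Indeed, $r(M/P)=0$ means $rm+P=P$ for every $m\in M$, that is, $rM\subseteq P$. Applying Proposition \ref{anhltr prime} to the module $M/P$ then gives that $M/P$ is $(P:_R M)$-prime. Part (1) is in fact the special case $M=R$, $P=I$ of part (2), since $(I:_R R)=I$. I will note this and present the argument once.

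There is no real obstacle here. The only point to state carefully is the notation $(P:_R M)$, which the paper has not defined explicitly: it is the ideal $\{r\in R: rM\subseteq P\}$, and it generalizes $(0:_R M)$. Once that is fixed, the proof is the annihilator identification above.
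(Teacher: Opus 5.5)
Your proposal is correct and follows the paper's route: the corollary is stated as an immediate consequence of Proposition~\ref{anhltr prime}, and your identifications $(0:_R R/I)=I$ and $(0:_R M/P)=(P:_R M)$ are exactly what is needed to apply it. Your remark that part (1) is the special case $M=R$, $P=I$ of part (2) is also right.
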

\begin{exam}\rm
Every simple module is prime. Since for all ideals $I\subseteq R$, $(0:_M I)$ is a submodule of $M$ and $M$ simple, implies that either $(0:_M I)=0$ or $(0:_M I)=M$. By Example 2.1, every module over a simple ring is also prime.
\end{exam}

\begin{exam}\rm
Every $I$-torsionfree module $M$ (i.e., $M$ is such that $\varGamma_I(M)=0$) is an $I$-prime module. This is because  $(0:_M I)\subseteq \varGamma_I(M)$. So, if $\varGamma_I(M)=0$, then $(0:_M I)=0$. By Proposition \ref{Iprimep1}, $M$ is $I$-prime.
 	\end{exam}
\begin{paragraph}\noi   While studying $I$-reduced $R$-modules in the papers \cite{Kyomuhangi2020, Kyomuhangi2023, SsevviiriI2024,  SsevviiriII2025}, the $I$-torsion functor  $\Gamma_I$ given in (\ref{I-torsion1}) proved very useful. For instance, we have Proposition \ref{Irem}.
\end{paragraph}
\begin{propn}\rm\cite[Proposition 2.2]{SsevviiriI2024}\label{Irem}
  For any $R$-module $M$ and an ideal $I$ of $R$, the following statements are equivalent: 
   \begin{enumerate}
       \item $M$ is $I$-reduced,
       \item $(0:_M I)=(0:_M I^2)$,
       \item $\text{Hom}_R(R/I,M)=\text{Hom}_R(R/I^2,M)$,
       \item $\varGamma_I(M)=\text{Hom}_R(R/I,M)$,
       \item $I\varGamma_I(M)=0$.
   \end{enumerate}
\end{propn}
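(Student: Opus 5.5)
I will prove the equivalences in the cycle $(1)\Leftrightarrow(2)\Leftrightarrow(3)$, then $(2)\Rightarrow(4)\Rightarrow(5)\Rightarrow(2)$. Throughout I use the standard identification $\text{Hom}_R(R/I^k,M)\cong(0:_M I^k)$, sending $f\mapsto f(1+I^k)$. I also use that these annihilator submodules form an increasing chain $(0:_M I)\subseteq(0:_M I^2)\subseteq\cdots$ whose union is $\varGamma_I(M)$, by (\ref{I-torsion1}).

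\textbf{Steps $(1)\Leftrightarrow(2)\Leftrightarrow(3)$.} The equivalence $(1)\Leftrightarrow(2)$ is a restatement of the definition. If $m\in(0:_M I^2)$, then $I^2m=0$, and $I$-reducedness gives $Im=0$. Hence $(0:_M I^2)\subseteq(0:_M I)$, and the reverse inclusion always holds. Conversely, if the two annihilators agree, then $I^2m=0$ forces $Im=0$. Condition $(3)$ is $(2)$ transported along the isomorphisms $\text{Hom}_R(R/I^k,M)\cong(0:_M I^k)$. I read the equalities in $(3)$ and $(4)$ as equalities of submodules of $M$ under these identifications. They are compatible: the inclusion $(0:_M I)\subseteq(0:_M I^2)$ corresponds to precomposing with the surjection $R/I^2\to R/I$.

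\textbf{Step $(2)\Rightarrow(4)$.} This is the only place requiring an induction. I will show $(0:_M I^{k})=(0:_M I)$ for all $k\ge1$. Suppose $I^{k+1}m=0$ with $k\ge2$, and let $r\in I^{k-1}$. Then $I^2(rm)\subseteq I^{k+1}m=0$, so $rm\in(0:_M I^2)=(0:_M I)$. This gives $I^km=I\cdot I^{k-1}m=0$, so $m\in(0:_M I^k)$. By induction $m\in(0:_M I)$. Hence the union in (\ref{I-torsion1}) collapses, and $\varGamma_I(M)=(0:_M I)\cong\text{Hom}_R(R/I,M)$.

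\textbf{Steps $(4)\Rightarrow(5)\Rightarrow(2)$.} If $\varGamma_I(M)=(0:_M I)$, then $I\varGamma_I(M)=I(0:_M I)=0$, which is $(5)$. For $(5)\Rightarrow(2)$, note $(0:_M I^2)\subseteq\varGamma_I(M)$. So for $m\in(0:_M I^2)$, condition $(5)$ gives $Im=0$, that is, $m\in(0:_M I)$.

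The main obstacle is the induction in $(2)\Rightarrow(4)$. The key is to multiply by elements of $I^{k-1}$ so that the hypothesis on $I^2$-annihilators can be applied to shifted elements. Everything else is unwinding definitions.
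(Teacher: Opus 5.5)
Your proof is correct and complete. The cycle $(1)\Leftrightarrow(2)\Leftrightarrow(3)$, $(2)\Rightarrow(4)\Rightarrow(5)\Rightarrow(2)$ covers all five statements. The induction in $(2)\Rightarrow(4)$ applies the $I^2$-hypothesis to $rm$ with $r\in I^{k-1}$ to get $I^k m=0$, and that is exactly what collapses the union defining $\varGamma_I(M)$.

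The paper gives no argument of its own here; it imports the result from \cite[Proposition 2.2]{SsevviiriI2024}. So there is no in-paper proof to compare against, and your direct verification makes the citation unnecessary.

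One small strengthening: you need not read the equalities in (3) and (4) as equalities under the canonical identifications. Since $I$ annihilates $\text{Hom}_R(R/I,M)$, any $R$-isomorphism $\text{Hom}_R(R/I^2,M)\cong\text{Hom}_R(R/I,M)$ already forces $I(0:_M I^2)=0$, which gives (2). Likewise, any isomorphism $\varGamma_I(M)\cong\text{Hom}_R(R/I,M)$ forces $I\varGamma_I(M)=0$, which is (5). So the equivalences hold under either reading.
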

\begin{paragraph}\noi
    Many of the statements in Proposition \ref{Irem} will be utilised in this paper. We will also show in Section $4$ that the functor $\Gamma_I$ characterises $I$-prime $R$-modules.
\end{paragraph}
 	
 \subsection{ $(I,J)$-prime modules}

\begin{propn}\rm\label{IJPropn1}
    Let $(I,J)$ be a fixed pair of ideals of a ring $R$ and $M$ be an $R$-module. The following statements are equivalent:
	\begin{enumerate}
		\item $M$ is $(I, J)$-prime, i.e., if $IJm=0$, then either $Im=0$ or $Jm=0$;
		
		\item either $(0:_M IJ)=(0:_M I)$ or $(0:_M IJ)=(0:_M J) $; 
		\item  $\text{Hom}_R(R/IJ,M)=\text{Hom}_R(R/I,M)$ or $ \text{Hom}_R(R/IJ,M)=\text{Hom}_R(R/J,M)$. 
		
		\end{enumerate}
\end{propn}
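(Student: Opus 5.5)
The plan is to argue directly with annihilator submodules, as in the proof of Proposition \ref{Iprimep1}. Everything rests on two elementary observations. First, since $IJ\subseteq I$ and $IJ\subseteq J$, we always have
$$(0:_M I)\subseteq (0:_M IJ)\quad\text{and}\quad (0:_M J)\subseteq (0:_M IJ).$$
Second, for any ideal $K$ of $R$, evaluation at $1+K$ gives an isomorphism $\text{Hom}_R(R/K,M)\cong (0:_M K)$. This isomorphism is natural in $K$: for $K\subseteq K'$ it identifies the inclusion $\text{Hom}_R(R/K',M)\hookrightarrow \text{Hom}_R(R/K,M)$, induced by the surjection $R/K\to R/K'$, with the inclusion $(0:_M K')\subseteq (0:_M K)$.

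For $(2)\Rightarrow(1)$, let $IJm=0$, so $m\in(0:_M IJ)$. If $(0:_M IJ)=(0:_M I)$, then $Im=0$. Otherwise $(0:_M IJ)=(0:_M J)$, and then $Jm=0$.

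For $(1)\Rightarrow(2)$, condition (1) says exactly that, as sets,
$$(0:_M IJ)=(0:_M I)\cup(0:_M J),$$
where the inclusion $\supseteq$ comes from the first observation. So the submodule $(0:_M IJ)$ is a union of two of its submodules. I will then use the standard fact that a group cannot be the union of two proper subgroups. Suppose $x\in(0:_M I)\setminus(0:_M J)$ and $y\in(0:_M J)\setminus(0:_M I)$. Then $x+y$ lies in $(0:_M IJ)$, so it lies in $(0:_M I)$ or in $(0:_M J)$. In the first case $y=(x+y)-x\in(0:_M I)$, and in the second case $x\in(0:_M J)$; both are contradictions. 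Hence one of the two annihilators contains the other, and their union, which is $(0:_M IJ)$, equals the larger one. This gives (2).

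For $(2)\Leftrightarrow(3)$, I apply the natural identification $\text{Hom}_R(R/K,M)\cong(0:_M K)$ with $K=IJ,I,J$. Under it, $\text{Hom}_R(R/I,M)$ and $\text{Hom}_R(R/J,M)$ become subobjects of $\text{Hom}_R(R/IJ,M)$ via the maps induced by $R/IJ\to R/I$ and $R/IJ\to R/J$. The equalities in (3) are then read as equalities of these subobjects, and they correspond exactly to the equalities in (2).

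The only non-routine step is $(1)\Rightarrow(2)$, where an element-wise dichotomy has to be upgraded to a global one. The union-of-two-subgroups argument handles this. A minor point of care is to interpret ``$=$'' in (3) via these canonical inclusions, not as an abstract isomorphism.
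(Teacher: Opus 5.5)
Your proof is correct and is exactly the elementary verification the paper leaves to the reader, whose proof reads only ``Elementary''. You rightly single out $(1)\Rightarrow(2)$ as the one step that needs an idea, namely that a group is never the union of two proper subgroups, which the paper never states. Your caution about reading ``$=$'' in (3) through the canonical inclusions is harmless but not needed. Any $R$-module isomorphic to $(0:_M I)$ is annihilated by $I$, so even an abstract isomorphism $\text{Hom}_R(R/IJ,M)\cong\text{Hom}_R(R/I,M)$ forces $(0:_M IJ)\subseteq(0:_M I)$, hence equality, and likewise for $J$. This is also why the paper's later use of ``$\cong$'' in place of ``$=$'' causes no trouble.
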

\begin{prf}
    Elementary.
\end{prf}

 \begin{defn}\rm
 An ideal $P$ of a ring $R$ is $I$-\textit{prime} (resp. $(I,J)$-\textit{prime}) if $P$ is an $I$-prime (resp. $(I,J)$-prime) submodule of the $R$-module $R$.
 \end{defn}
 \begin{defn}\rm
 Let $I$ be a fixed ideal of a ring $R$. $P$ is an $I$-\textit{semiprime} ideal of $R$ if whenever $I^2\subseteq P$, then  $I\subseteq P$.
  \end{defn}

    \begin{propn}\rm\label{diagram1} For any fixed pair of  ideals $(I, J)$ of a  ring $R$ and for any submodule $P$ of an $R$-module $M$,  we have Chart  \ref{imp} of implications.

    \end{propn}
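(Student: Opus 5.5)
The plan is to verify each arrow of Chart \ref{imp} separately. Since every notion for a submodule $P$ is defined through the quotient $M/P$ (Definition \ref{locally submodules}), I will prove each implication at the level of modules and then apply it to $\bar M=M/P$. I expect the chart to contain three kinds of arrows.

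\emph{Global $\Rightarrow$ local.} These are the arrows prime $\Rightarrow$ $I$-prime, weakly prime $\Rightarrow$ $(I,J)$-prime, and reduced (semiprime) $\Rightarrow$ $I$-reduced. Each is immediate, because the global definition quantifies over all ideals and we only specialise to the fixed $I$ or the fixed pair $(I,J)$. Where convenient I will phrase this through Proposition \ref{Iprimep1}(2).

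\emph{Comparisons between the local notions.} These are the arrows that carry content.
\begin{enumerate}
\item $I$-prime $\Rightarrow$ $I$-reduced. Let $I^2m=0$ and put $N=Im$, a submodule with $IN=0$. By Proposition \ref{Iprimep1}(2), either $N=0$, so $Im=0$, or $IM=0$, which again gives $Im=0$.
\item $(I,J)$-prime with $J=I$ is exactly $I$-reduced. Indeed, $(I,I)$-primeness says that $I^2m=0$ forces $Im=0$, which is Proposition \ref{Irem}(2).
\item $(I,J)$-prime $\Leftrightarrow$ $(J,I)$-prime, by commutativity of $R$ (so $IJ=JI$).
\end{enumerate}
Using the standard global facts (prime $\Rightarrow$ weakly prime $\Rightarrow$ reduced), these arrows make the square linking global and local notions commute.

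\emph{Module level $\Leftrightarrow$ ideal level.} For the ideal versions I will take $M=R$ and use the following equivalences.
\begin{enumerate}
\item $P$ is an $I$-prime ideal iff either $(P:_R I)=P$ or $I\subseteq P$. This is Proposition \ref{Iprimep1}(3) applied to $R/P$.
\item $P$ is an $I$-semiprime submodule of $R$ iff for all $r$, $I^2r\subseteq P$ implies $Ir\subseteq P$. Taking $r=1$ gives the ideal-theoretic $I$-semiprime condition ($I^2\subseteq P\Rightarrow I\subseteq P$), so the submodule notion implies the ideal notion.
\end{enumerate}
I would also check the dictionary $(P:_R M)$: by the corollary to Proposition \ref{anhltr prime}, $M/P$ is always $(P:_RM)$-prime, which supplies any arrow in the chart specialising $I$ to $(P:_RM)$.

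The main obstacle is that I cannot see the exact arrows of Chart \ref{imp}. Some displayed arrow may be intended to be strict, or may be false in general: for instance, $I$-prime $\not\Rightarrow$ $(I,J)$-prime, or $I$-semiprime ideal $\not\Rightarrow$ $I$-semiprime submodule. For such arrows I would supply short counterexamples over $\Z$ or $\Z/p^n\Z$, such as $M=\Z/4\Z$ with $I=2\Z$. The substantive step is (1) in the second group, passing from $I^2m=0$ to the submodule $Im$. I will check carefully that it is used only in the direction $I$-prime $\Rightarrow$ $I$-reduced and not in the converse, which fails: $\Z\oplus\Z/2\Z$ is $2\Z$-reduced but not $2\Z$-prime.
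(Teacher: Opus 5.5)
Your proof of $I$-prime $\Rightarrow$ $I$-reduced (apply Proposition \ref{Iprimep1}(2) in $M/P$ to the submodule coming from $Im$) is exactly the paper's argument, and your global-to-local arrows are fine. The genuine gap is the bottom-row arrow ``$P$ is $I$-prime $\Rightarrow$ $P$ is $(I,J)$-prime'' of Chart \ref{imp}. This is one of the two arrows the paper actually proves. You leave it unproved, and you even list it as possibly false and plan to refute it.

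It is true for every $J$, by your own trick with $Jm$ in place of $Im$. Suppose $IJm\subseteq P$. Then $(Jm+P)/P$ is a submodule of $M/P$ annihilated by $I$, so by Proposition \ref{Iprimep1}(2) either $Jm\subseteq P$ or $IM\subseteq P$. The second case gives $Im\subseteq P$. Your remark that $(I,I)$-prime means $I$-reduced shows that your $I$-reduced argument is just the case $J=I$ of this arrow. Your suggested witness $\Z/4\Z$ with $I=2\Z$ cannot refute anything here, because it is not $2\Z$-prime: $(0:_{\Z/4\Z}2\Z)=2\Z/4\Z$ is neither $0$ nor the whole module.

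A second, smaller gap concerns the right-hand column of the chart. It is about the colon ideal $(P:_RM)$ of a submodule $P$ of an arbitrary module $M$, so specialising to $M=R$ (where $(P:_RR)=P$) does not establish those arrows. Most of them are one line. For example, if $M/P$ is $I$-reduced and $I^2\subseteq(P:_RM)$, then $I^2m\subseteq P$ for all $m$, hence $IM\subseteq P$.

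The arrow ``$P$ is $(I,J)$-prime $\Rightarrow$ $(P:_RM)$ is $(I,J)$-prime'' needs one more idea, and so does weakly prime $\Rightarrow$ $(P:_RM)$ prime. From $IJr\subseteq(P:_RM)$ the elementwise hypothesis only gives $M=A\cup B$, where $A=\{m: Irm\subseteq P\}$ and $B=\{m: Jrm\subseteq P\}$ are submodules. You must then use that a module is never the union of two proper submodules to get $A=M$ or $B=M$. The paper dismisses these arrows as trivial or well known, but your plan as written does not supply them.
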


\begin{table}[ht]
  \begin{tabular}[h]{cccccc}
&&  ${M/P}$ is  $I$-reduced module & $\Rightarrow$ & $(P:M)\lhd R$ is \\
 & &                              &      &  $I$-semiprime\\

 &	& \rotatebox{90}{$\Rightarrow$}  &          & \rotatebox{90}{$\Rightarrow$} \\

&& $M/P$ is a reduced module & $\Rightarrow$ & $(P:M)\lhd R$ is    \\

& &                              &      &  semiprime\\

&&  \rotatebox{90}{$\Rightarrow$}  & & \rotatebox{90}{$\Rightarrow$} \\

 \text{	$P$ is prime}  & $\Rightarrow$ &   $P$ is weakly prime &	 $\Rightarrow$ &  $(P:M)\lhd R$ is    \\

        &&                       &       &prime\\

 	$\Downarrow $&& $\Downarrow$ && $\Downarrow $ \\

$P$ is $I$-prime & $\Rightarrow$ &  $P$ is $(I,J)$-prime & $\Rightarrow$ & $(P:M)\lhd R$ is   \\

& &                              &      &  $(I,J)$-prime\\

 		 $\Downarrow$      \\
 		 
 	    $M/P$ is $I$-reduced	& \\
 \end{tabular}

 \caption{Chart of implications between different prime modules.}

\label{imp}
\end{table}

  \begin{prf}
   We only prove that  a submodule $P$ of $M$ is $I$-prime implies $P$ is $(I, J)$-prime and  $P$  is $I$-prime implies that the $R$-module $M/P$ is $I$-reduced. The rest of the implications are either trivial or well known. 
   Let $(I, J)$ be a fixed pair of ideals of a ring $R$. Suppose that $P$ is an $I$-prime submodule of $M$ and $IJm\subseteq P$. By the definition of $I$-prime submodule and Proposition \ref{Iprimep1}, either $Jm\subseteq P$ or $IM\subseteq P$. So, $Jm\subseteq P$ or $Im\subseteq IM \subseteq P$. Suppose that $I^2m\subseteq P$ for some $m\in M$, i.e., $I(Im)\subseteq P$. Then by  Proposition \ref{Iprimep1},  either $Im\subseteq P$ or $IM\subseteq P$. In both cases $Im\subseteq P$. So, $P$ is an $I$-semiprime submodule of $M$ and $M/P$ is an $I$-reduced $R$-module.
	\end{prf}

   \begin{propn}\label{tksubmodules}\rm  Both  $I$-prime $R$-modules and $(I, J)$-prime $R$-modules are closed under taking submodules.
 \end{propn}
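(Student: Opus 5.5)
The plan is to argue directly from the elementwise definitions in Definition \ref{locally}, since both conditions are statements about single elements of the module. The only extra information a submodule needs is how conditions on the ambient module $M$ restrict to it.

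For $I$-prime modules, let $M$ be $I$-prime and let $N$ be a submodule of $M$.
\begin{enumerate}
\item Take $n\in N$ with $In=0$.
\item Regard $n$ as an element of $M$. The $I$-primeness of $M$ gives either $n=0$ or $IM=0$.
\item In the second case, $IN\subseteq IM=0$, so $IN=0$.
\end{enumerate}
So $N$ satisfies the defining condition of an $I$-prime module. Equivalently, via Proposition \ref{Iprimep1}(3), one can observe that $(0:_N I)=(0:_M I)\cap N$. If $(0:_M I)=0$, then $(0:_N I)=0$. If $(0:_M I)=M$, then $(0:_N I)=N$. Either way $(0:_N I)$ is a trivial submodule of $N$. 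I will present the elementwise version and mention this intersection identity as the conceptual reason.

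For $(I,J)$-prime modules, the argument is even more direct. Let $M$ be $(I,J)$-prime and take $n\in N$ with $IJn=0$. Viewing $n$ in $M$, we get $In=0$ or $Jn=0$, and these equalities do not depend on whether $n$ is regarded in $N$ or in $M$. Alternatively, one can use Proposition \ref{IJPropn1}(2) together with the identity $(0:_N K)=(0:_M K)\cap N$ for $K=I,J,IJ$.

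There is no genuine obstacle here. The only point needing care is the $I$-prime case, where the dichotomy for $M$ is global ("$IM=0$") rather than elementwise. One must check that $IM=0$ descends to the submodule as $IN=0$, which is immediate from $IN\subseteq IM$. This descent would fail for quotients, which explains why the dual notions instead behave well under homomorphic images.
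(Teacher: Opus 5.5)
Your argument is correct and is essentially the paper's proof: you view $n\in N$ as an element of $M$, apply the defining condition there, and use $IN\subseteq IM$ in the $I$-prime case (the identity $(0:_N K)=(0:_M K)\cap N$ just repackages this). One small correction to your closing remark: $IM=0$ does pass to quotients, since $I(M/P)=(IM+P)/P$; what actually fails for $M/P$ is transferring the hypothesis, because $I\bar m=0$ in $M/P$ only gives $Im\subseteq P$, not $Im=0$ in $M$.
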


 \begin{prf} Let $N$ be a submodule of an $R$-module $M$ and $n\in N$ such that $In=0$. If $M$ is $I$-prime, then either $n=0$ or $IM=0$. Therefore, either $n=0$ or $IN=0$. Now suppose that $M$ is an $(I, J)$-prime module and $IJn=0$ for $n\in N$. Just like in the first case, $n\in M$. Since $M$ is $(I, J)$-prime, either $In=0$ or $Jn=0$ and this shows that $N$ is also an $(I, J)$-prime  $R$-module.
 \end{prf}
 
\begin{paragraph}\noi

 Examples \ref{exIp-P} and \ref{exIr-Ip} demonstrate that the reverse implications in Chart \ref{imp} do not hold in general.
 \end{paragraph}
 \begin{exam}\label{exIp-P}\rm Let $k$ be a field. For ideals $I:=(x^2)$ and $J:=(x)$ of a ring $R:=k[x]$, the $R$-module $M:=k[x]/(x^2)$ is both  $I$-prime and $(I,J)$-prime. However, it is neither prime nor $J$-prime. So, $I$-prime $\not\Rightarrow$ prime and $(I,J)$-prime $\not\Rightarrow$ $J$-prime.  In addition, since $J^2m=0$ for all $m\in M$, but $Jm\neq 0$ for $m=1$,  $(I,J)$-prime $\not\Rightarrow$ weakly prime.
        \end{exam}
     
         \begin{exam}\rm\label{exIr-Ip}
             Let $R:=\mathbb{Z}_6$ and consider the $R$-module $M:=\mathbb{Z}_6$. For an ideal $I:=3\mathbb{Z}_6$  of $R$, the $\mathbb{Z}_6$-module is $I$-reduced since $I$ is  idempotent. However, since  $(0:_M I)=2\mathbb{Z}_6$ it is neither zero nor $M$. By Proposition \ref{Iprimep1}, $M$ is not $I$-prime.
         \end{exam}

\section{The dual notions of locally prime modules}

 \subsection{Globally coprime modules}
\begin{paragraph}\noi
 A module $M$ over a ring $R$ is \textit{coprime}~(also called second) if for all $r\in R$, either $rM=0$ or $rM=M$, see \cite{AnsariToroghy2011, Ceken2013, Yassemi2001}. 
 \end{paragraph}
 \begin{propn}\label{global-cpm}\rm
 For any ring $R$ and an $R$-module $M$, the following statements are equivalent:
 \begin{enumerate}
     \item for all $r\in R$, either $rM=0$ or $rM=M$;
     \item for all ideals $I$ of $R$, either $IM=0$ or $IM=M$.
 \end{enumerate}
 \end{propn}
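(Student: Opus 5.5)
The plan is to prove the two implications separately. The direction $(2)\Rightarrow(1)$ is immediate: given $r\in R$, apply (2) to the principal ideal $I=Rr$. Since $R$ is commutative and unital, $(Rr)M=rRM=rM$, so either $rM=0$ or $rM=M$.

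For $(1)\Rightarrow(2)$, fix an ideal $I$ of $R$ and split into cases according to whether every element of $I$ annihilates $M$.

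\emph{Case 1: $rM=0$ for all $r\in I$.} Then $IM$, the submodule generated by the products $rm$ with $r\in I$ and $m\in M$, is $0$.

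\emph{Case 2: some $r_0\in I$ has $r_0M\neq 0$.} By (1), $r_0M=M$. Hence $M=r_0M\subseteq IM\subseteq M$, so $IM=M$.

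The only point needing care is in Case 2: we need only a single element with $rM=M$, not all of them. This is exactly what makes the dichotomy in (1) pass to ideals, even though $IM$ is a sum $\sum_{r\in I} rM$ of submodules each equal to $0$ or $M$. I expect no real obstacle here. The whole argument rests on commutativity, which makes $rM$ a submodule and gives $(Rr)M=rM$.
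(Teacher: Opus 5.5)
Your proof is correct and follows the paper's argument: $(2)\Rightarrow(1)$ is obtained by taking the principal ideal $I=(r)$. For $(1)\Rightarrow(2)$ you split on whether $I\subseteq(0:_R M)$; in the second case a single $r\in I$ with $rM=M$ gives $M=rM\subseteq IM\subseteq M$.
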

\begin{prf}\rm
  $(2)\Rightarrow (1)$:  $1)$ is a special case of $2)$. Take $I=(r)$ for any $r\in R$ since $rM=(r)M$. $(1)\Rightarrow (2)$: Suppose that $1)$ holds and $I$ is an arbitray ideal of a ring $R$. If $I\subseteq (0:_R M)$, then $IM=0$. Now, suppose that $I\not\subseteq (0:_R M)$. There exists $r\in I$ such that $rM\neq0$. By $1)$, $rM=M$. However, $rM\subseteq IM \subseteq M$. So, $rM=M$ implies that $IM=M$.
  
\end{prf}
\begin{paragraph}\noi
In this paper, we will mostly utilize statement $2)$ of Proposition \ref{global-cpm} which exhibits a nice formal duality with the earlier defined prime modules. As a  formal dual to weakly prime modules and reduced modules, we define weakly coprime modules and coreduced modules respectively. An $R$-module $M$ over a ring $R$ is \textit{weakly coprime} if for any pair of ideals $I$ and $J$ of $R$, either $IJM=IM$ or $IJM=JM$. A module $M$ over a ring $R$ is \textit{coreduced}  if for all ideals $I$ of $R$, $IM=I^2M$, see \cite{SsevviiriI2024}.   It is worth noting that for an ideal $I$ of a ring $R$ and  an $R$-module $M$, the  condition $IM=0$ in the definition of an $I$-coprime $R$-module $M$ coincides with the condition $(0:_MI)=M$ in the definition of an $I$-prime $R$-module $M$. It follows that if $IM=0$, then the $R$-module $M$ is both $I$-prime and $I$-coprime.    
\end{paragraph}

 \subsection{Locally coprime modules}

\begin{paragraph}\noi
  Dualizing  Definition  \ref{locally}, yields Definition \ref{dualdef}.

\end{paragraph}

 \begin{defn}\rm\label{dualdef} Let $I$ and $J$ be fixed ideals of $R$. An $R$-module $M$ is:
 \begin{enumerate} 
 \item  $I$-\textit{coreduced} \cite{SsevviiriI2024} if $IM = I^2 M$;
 \item $I$-\textit{coprime} if either $IM=0$ or $IM=M$, i.e., if $IM$ is a trivial submodule of $M$;
 \item $(I, J)$-\textit{coprime} if either  $IJM= IM$ or $IJM=JM$.
 \end{enumerate}
 \end{defn}

 \begin{propn}\rm
     Let $M$ be an $R$-module and $I$ be an ideal of $R$. The following statements are equivalent:
     \begin{enumerate}
         \item $M$ is $I$-coprime,
         \item either $M/IM= M$ or $M/IM=0$,
         \item either  $R/I \otimes_R M\cong M$ or $R/I \otimes_R M\cong 0$.
     \end{enumerate}
      \end{propn}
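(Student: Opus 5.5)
The plan is to prove the equivalences in the cycle $(1)\Leftrightarrow(2)\Leftrightarrow(3)$. We use the canonical projection $\pi\colon M\to M/IM$ and the natural isomorphism $R/I\otimes_R M\cong M/IM$, given by $\bar r\otimes m\mapsto \overline{rm}$. This isomorphism is standard: apply the right exact functor $-\otimes_R M$ to the exact sequence $0\to I\to R\to R/I\to 0$, and identify the image of $I\otimes_R M\to R\otimes_R M\cong M$ with $IM$.

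For $(1)\Leftrightarrow(2)$, the statement ``$M/IM=M$'' should be read as ``$\pi$ is an isomorphism''. The literal equality only makes sense under that identification.

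\begin{itemize}
\item If $IM=0$, then $\ker\pi=IM=0$, so $\pi$ is an isomorphism and $M/IM=M$.
\item If $IM=M$, then $M/IM=0$.
\item Conversely, if $\pi$ is an isomorphism, then $IM=\ker\pi=0$.
\item If $M/IM=0$, then $IM=M$.
\end{itemize}

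Hence condition (2) is precisely the condition defining $I$-coprimeness in Definition \ref{dualdef}.

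For $(2)\Leftrightarrow(3)$, transport along the isomorphism $R/I\otimes_R M\cong M/IM$. If $M/IM=0$, then $R/I\otimes_R M\cong 0$. If $M/IM\cong M$ via $\pi$, then $R/I\otimes_R M\cong M$.

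The main obstacle is the converse direction of (3). An abstract isomorphism $R/I\otimes_R M\cong M$ does not obviously force $IM=0$: a module can be isomorphic to a proper quotient of itself, for example a non-Hopfian module. To get around this, I would interpret (3), as in (2), as saying that the natural map $M\cong R\otimes_R M\to R/I\otimes_R M$ is an isomorphism. This matches how Proposition \ref{Iprimep1}(4) is really about the natural inclusion $\text{Hom}_R(R/I,M)\hookrightarrow M$ being equal to $M$. Under this reading, the kernel of that natural map is $IM$, so bijectivity gives $IM=0$, and the equivalence follows.

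I would add a remark noting this convention. The abstract-isomorphism version would additionally need $M$ to be Hopfian, for instance Noetherian. In that case any surjection $M\to M/IM\cong M$ composed with the isomorphism is a surjective endomorphism of $M$, hence injective, and so again $IM=0$.
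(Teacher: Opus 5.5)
Your route through the projection $\pi$ and the natural isomorphism $R/I\otimes_R M\cong M/IM$ is fine, and it is presumably what the paper means by ``Elementary.'' The problem is your ``main obstacle.'' It is not real. You sidestep it by re-reading (2) and (3) as statements about natural maps, so you prove $(3)\Rightarrow(1)$ only under that strengthened reading. You then assert, incorrectly, that the statement as written needs $M$ to be Hopfian.

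The missing observation is that $I$ annihilates $R/I\otimes_R M$, and annihilators are invariant under isomorphism. Let $\varphi\colon M\to R/I\otimes_R M$ be \emph{any} $R$-isomorphism, and take $a\in I$ and $m\in M$. Then $\varphi(am)=a\varphi(m)=0$, so $am=0$ by injectivity. Hence $(0:_R M)=(0:_R R/I\otimes_R M)\supseteq I$, i.e.\ $IM=0$. The same one-line argument handles an abstract isomorphism $M/IM\cong M$ in (2). So no convention about natural maps is needed, and (2) and (3) can be read literally.

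The non-Hopfian phenomenon you cite concerns quotients $M/N$ with $N$ an arbitrary submodule. Here $N=IM$, and the quotient $M/IM$ is killed by $I$, which rules out $M/IM\cong M$ unless $IM=0$. Your closing remark is therefore false: no Hopfian or Noetherian hypothesis is required. Replace the reinterpretation and that remark with the annihilator argument, and the proof is complete for the statement exactly as written.

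Your aside on Proposition \ref{Iprimep1}(4) is also unnecessary. $\text{Hom}_R(R/I,M)\cong(0:_M I)$ is killed by $I$, so an abstract isomorphism $\text{Hom}_R(R/I,M)\cong M$ again forces $IM=0$, i.e.\ $(0:_M I)=M$.
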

\begin{prf}
 Elementary.

\end{prf}
\begin{exam}\rm 
 Every simple $R$-module is coprime  as is every module over a simple ring. Every idempotent ideal $I$ of a ring $R$ is $I$-coprime as an $R$-module as is every ideal $I$ of $R$ such that $I^2=0$.
 \end{exam}  
 \begin{paragraph}\noi
     The $I$-adic completion functor $\Lambda_I$ associates to each $R$-module $M$, an $R$-module 
    \begin{equation}
	    \Lambda_I(M):=\underset{k}{\underleftarrow{\lim}} (M/I^k M)\cong \underset{k}{\underleftarrow{\lim}}(R/I^k\otimes_R M). 
        \end{equation} 
        It was crucial in characterizing $I$-coreduced modules  in \cite{SsevviiriI2024}   (demonstrated by Proposition \ref{Icorem}) as well as in obtaining the applications of $I$-coreduced $R$-modules in \cite{Kyomuhangi2023, SsevviiriII2025}.   We will use the statements from  Proposition \ref{Icorem} throughout the paper.
 \end{paragraph}
\begin{propn}\rm \cite[ovProposition 2.3]{SsevviiriI2024}\label{Icorem}
    For any $R$-module $M$ and an ideal $I$ of $R$, the following statements are equivalent:
    \begin{enumerate}
        \item $M$ is $I$-coreduced,
\item $IM = I^2M$,
\item $R/I\otimes_R M = R/I^2 \otimes_R M$,
\item $ \Lambda_I(M) = R/I \otimes_R M$,
\item $I\Lambda_I(M)=0$.
    \end{enumerate}

\end{propn}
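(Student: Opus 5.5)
The plan is to prove the equivalences by passing through the quotients $M/I^kM$ and the tensor identification $R/I^k\otimes_R M\cong M/I^kM$. I would establish $(1)\Leftrightarrow(2)\Leftrightarrow(3)$ directly and then handle $(2)\Rightarrow(4)\Rightarrow(5)\Rightarrow(2)$.

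For $(1)\Leftrightarrow(2)$ there is nothing to do, since (2) is Definition \ref{dualdef}(1). For $(2)\Leftrightarrow(3)$, I would use the natural isomorphisms $R/I\otimes_R M\cong M/IM$ and $R/I^2\otimes_R M\cong M/I^2M$. Under these, the canonical surjection $R/I^2\otimes_R M\to R/I\otimes_R M$ becomes the projection $M/I^2M\to M/IM$. Its kernel is $IM/I^2M$, so it is an isomorphism (the meaning of ``$=$'') exactly when $IM=I^2M$.

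For $(2)\Rightarrow(4)$, I would argue by induction that $I^kM=IM$ for all $k\ge 1$. The inductive step is $I^{k+1}M=I(I^kM)=I(IM)=I^2M=IM$. Then the inverse system $\{M/I^kM\}$ has all transition maps equal to the identity of $M/IM$. Hence $\Lambda_I(M)=\varprojlim M/I^kM\cong M/IM\cong R/I\otimes_R M$.

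For $(4)\Rightarrow(5)$, interpret (4) as saying that the canonical map $\Lambda_I(M)\to M/IM$ is an isomorphism. Since $I(M/IM)=0$, it follows that $I\Lambda_I(M)=0$.

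For $(5)\Rightarrow(2)$, use the canonical map $\pi_2:\Lambda_I(M)\to M/I^2M$. Each projection $\Lambda_I(M)\to M/I^kM$ is surjective, because one can lift compatibly along the surjective transition maps. Then $I\cdot\pi_2(\Lambda_I(M))=\pi_2(I\Lambda_I(M))=0$, so $I(M/I^2M)=0$. This gives $IM\subseteq I^2M$, and hence $IM=I^2M$.

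The main obstacle is this last step: one must check that the projection $\Lambda_I(M)\to M/I^2M$ is surjective. This holds because, given $\bar m\in M/I^2M$, the family $(m+I^kM)_k$ is itself a compatible element of the limit mapping to $\bar m$. So the image of the diagonal map $M\to\Lambda_I(M)$ already suffices, and no Mittag-Leffler argument is needed.
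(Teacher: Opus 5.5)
Your proof is correct and complete. The paper gives no argument of its own for this statement; it is quoted from \cite[Proposition 2.3]{SsevviiriI2024}. So your write-up is a self-contained proof rather than a variant of one in the text, and it is a sound one.

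Two features are worth keeping.

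First, in $(5)\Rightarrow(2)$ you use only the surjectivity of $\pi_2\colon\Lambda_I(M)\to M/I^2M$. You correctly obtain it from the diagonal map $M\to\Lambda_I(M)$, and this needs no hypothesis on $I$. By contrast, identifications such as $\Lambda_I(M)/I\Lambda_I(M)\cong M/IM$, or $I^k\Lambda_I(M)=\ker(\Lambda_I(M)\to M/I^kM)$, are only guaranteed for finitely generated $I$. The paper works with arbitrary ideals, so avoiding them matters.

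Second, you read ``$=$'' in (3) and (4) as ``the canonical map is an isomorphism''. Your argument also covers the weaker reading of an abstract $R$-isomorphism, which is how the paper later invokes the result with $\cong$ (e.g.\ in Proposition \ref{Injcogenerator}). Since $I$ annihilates $M/IM$, any isomorphism $M/I^2M\cong M/IM$ forces $I(M/I^2M)=0$, i.e.\ $IM\subseteq I^2M$. Your $(4)\Rightarrow(5)$ step already works for any isomorphism, for the same reason.

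Conversely, your $(2)\Rightarrow(4)$ produces the canonical isomorphism $\Lambda_I(M)\to M/IM$. This is natural in $M$, which is exactly what the proof of Theorem \ref{GM} uses.
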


 \begin{propn}\rm
    Let $(I, J)$  be a fixed pair of ideals of $R$ and let $M$ be an $R$-module. The following statements are equivalent:
     \begin{enumerate}
         \item $M$ is $(I,J)$-coprime, (i.e., either $IJM=IM$ or $IJM=JM$); 
         \item either $M/IJM= M/IM$ or $M/IJM=M/JM$;
         \item either  $R/IJ \otimes_R M\cong R/I \otimes_R M$ or $R/IJ \otimes_R M\cong R/J \otimes_R M$.
         \end{enumerate}
\end{propn}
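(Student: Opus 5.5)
The plan is to prove $(1)\Leftrightarrow(2)$ and then $(2)\Leftrightarrow(3)$, using only the containments $IJM\subseteq IM$ and $IJM\subseteq JM$ and the natural isomorphism $R/K\otimes_R M\cong M/KM$ for any ideal $K$ of $R$. The statement will be read in the sense of the earlier $I$-coprime proposition. Thus ``$M/IJM=M/IM$'' means that the canonical surjection $M/IJM\twoheadrightarrow M/IM$ is an isomorphism, and the isomorphisms in $(3)$ are those induced by the canonical maps $R/IJ\to R/I$ and $R/IJ\to R/J$.

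For $(1)\Leftrightarrow(2)$, the first step is to observe that $IJ\subseteq I$, so $IJM\subseteq IM$. Hence the natural map $\pi:M/IJM\to M/IM$, $m+IJM\mapsto m+IM$, is a well-defined epimorphism with kernel $IM/IJM$. It follows that $\pi$ is injective, i.e.\ $M/IJM=M/IM$, exactly when $IM=IJM$. Symmetrically, $J I\subseteq J$, and since $R$ is commutative $IJ=JI$, so the same argument gives $M/IJM=M/JM$ exactly when $JM=IJM$. Taking the disjunction of these two equivalences gives $(1)\Leftrightarrow(2)$.

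For $(2)\Leftrightarrow(3)$, I would invoke the standard natural isomorphism $R/K\otimes_R M\cong M/KM$, with $r+K\otimes m\mapsto rm+KM$, for $K\in\{IJ,I,J\}$. Naturality in the ring argument means that these isomorphisms identify $R/IJ\otimes_R M\to R/I\otimes_R M$ (induced by $R/IJ\twoheadrightarrow R/I$) with $\pi$, and similarly for $J$. So $(3)$ is literally a restatement of $(2)$.

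The only delicate point is the interpretation of ``$\cong$'' in $(3)$. An abstract isomorphism $M/IJM\cong M/IM$ need not force $IJM=IM$ for non-Noetherian $M$, since a module can be isomorphic to a proper quotient of itself. I would therefore make explicit that the isomorphisms are the canonical ones, exactly as in the proof of Proposition~\ref{Icorem} from \cite{SsevviiriI2024} where ``$=$'' is used. With this convention everything else is routine.
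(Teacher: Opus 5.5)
Your arguments for $(1)\Leftrightarrow(2)$ and $(2)\Leftrightarrow(3)$ are correct and match the routine verification the paper leaves unwritten. For $(1)\Leftrightarrow(2)$ you use that the canonical surjection $M/IJM\to M/IM$ has kernel $IM/IJM$, and symmetrically for $J$ using $IJ=JI\subseteq J$. For $(2)\Leftrightarrow(3)$ you use the natural isomorphism $R/K\otimes_R M\cong M/KM$.

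The flaw is your final paragraph. You claim an abstract isomorphism $M/IJM\cong M/IM$ need not force $IJM=IM$, and this is false. So restricting $(3)$ to the canonical maps weakens the statement unnecessarily. As written, you have not proved $(3)\Rightarrow(1)$ for the literal ``$\cong$''.

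The missing observation is an annihilator argument. $I$ annihilates $R/I\otimes_R M\cong M/IM$, and annihilators are preserved by any $R$-linear isomorphism. So if $R/IJ\otimes_R M\cong R/I\otimes_R M$ by any isomorphism whatsoever, then $I$ annihilates $M/IJM$, which means $IM\subseteq IJM$. Together with $IJM\subseteq IM$, this gives $IJM=IM$. The same argument with $J$ handles the other alternative.

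The phenomenon of a module being isomorphic to a proper quotient of itself cannot occur here. The two quotients have different annihilators unless the submodules coincide. The same trick also justifies the abstract ``$\cong$'' in Proposition \ref{Iprimep1}(4) and in the $I$-coprime characterization.

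This matters because the paper genuinely uses the abstract reading. For example, in Proposition \ref{Hom-reversed1} it concludes that $M$ is $(I,J)$-coprime from an isomorphism $R/I\otimes_R M\cong R/IJ\otimes_R M$ that is not exhibited as the canonical one. Replace your caveat with the annihilator remark and your proof is complete for the statement as written.
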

  \begin{propn}\label{diagram2}\rm Let $(I,J)$ be a fixed pair of ideals of a ring $R$. For any $R$-module $M$, we have the implications in Chart \ref{dualimp} which are dual to the ones in Chart \ref{imp}.
 \end{propn}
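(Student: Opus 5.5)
The plan is to verify each arrow of Chart \ref{dualimp} directly from Definition \ref{dualdef}, Proposition \ref{global-cpm}, and the definitions of weakly coprime and coreduced modules. I expect the chart to mirror Chart \ref{imp}, with the submodule $P$ replaced by the module $M$ itself and the colon ideal $(P:M)$ replaced by the annihilator $(0:_R M)$. So the arrows to check are:
\begin{itemize}
\item the global-to-local arrows: coprime $\Rightarrow$ $I$-coprime, weakly coprime $\Rightarrow$ $(I,J)$-coprime, and coreduced $\Rightarrow$ $I$-coreduced;
\item the arrows among the global notions: coprime $\Rightarrow$ weakly coprime $\Rightarrow$ coreduced;
\item the arrows among the local notions: $I$-coprime $\Rightarrow$ $(I,J)$-coprime and $I$-coprime $\Rightarrow$ $I$-coreduced;
\item the arrows from module properties to properties of $(0:_R M)$.
\end{itemize}

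The global-to-local arrows are immediate. Each global notion quantifies over all ideals, so it specialises to the fixed $I$ or the fixed pair $(I,J)$; for coprime modules this uses form $(2)$ of Proposition \ref{global-cpm}. The arrow weakly coprime $\Rightarrow$ coreduced follows by taking $J=I$, which gives $I^2M=IM$. The arrow coprime $\Rightarrow$ weakly coprime follows from the local computation below, applied to every ideal $I$.

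The two genuinely local arrows are the heart of the proof, and both come from a two-case split on $I$-coprimality.
\begin{itemize}
\item \emph{$I$-coprime $\Rightarrow$ $(I,J)$-coprime.} If $IM=0$, then $IJM=J(IM)=0=IM$. If $IM=M$, then $IJM=J(IM)=JM$.
\item \emph{$I$-coprime $\Rightarrow$ $I$-coreduced.} If $IM=0$, then $I^2M=0=IM$. If $IM=M$, then $I^2M=I(IM)=IM$.
\end{itemize}
These two computations are the duals of the arguments in the proof of Proposition \ref{diagram1}.

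For the arrows to the ideal $(0:_R M)$, I argue as follows.
\begin{itemize}
\item \emph{Coprime $\Rightarrow$ $(0:_R M)$ prime.} Suppose $abM=0$ and $aM\neq 0$. Coprimality gives $aM=M$, hence $bM=baM=0$.
\item \emph{$(I,J)$-coprime $\Rightarrow$ $(0:_R M)$ is $(I,J)$-prime.} Suppose $IJ\subseteq (0:_R M)$, so $IJM=0$. Then either $IM=IJM=0$ or $JM=IJM=0$, that is, $I\subseteq(0:_R M)$ or $J\subseteq(0:_R M)$.
\item \emph{$I$-coreduced $\Rightarrow$ $(0:_R M)$ is $I$-semiprime.} If $I^2\subseteq(0:_R M)$, then $IM=I^2M=0$.
\item \emph{Coreduced $\Rightarrow$ $(0:_R M)$ semiprime.} Apply the previous argument to every ideal $I$.
\item \emph{Weakly coprime $\Rightarrow$ $(0:_R M)$ prime.} Apply the $(I,J)$-coprime argument to every pair of ideals.
\end{itemize}
The remaining vertical arrows on the ideal side are just prime $\Rightarrow$ semiprime and specialisation to fixed ideals.

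I expect the main obstacle to be pinning down exactly which arrows Chart \ref{dualimp} contains, rather than any single calculation. In particular, some arrows may need to be phrased in terms of the ideal $(IM:_R M)$, or in terms of quotient modules $M/IM$, instead of $(0:_R M)$. If so, I would redo the above case splits with $IM$, $IJM$, and $I^2M$ playing the role of $0$, using that multiplication of submodules by ideals is associative and commutative. To support the claim that the reverse implications fail, I would give dual examples in the style of Examples \ref{exIp-P} and \ref{exIr-Ip}. For instance, $\mathbb{Z}_6$ with $I=3\mathbb{Z}_6$ is $I$-coreduced, but it is not $I$-coprime since $IM=3\mathbb{Z}_6$ is neither $0$ nor $M$.
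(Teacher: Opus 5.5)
Your proof is correct and follows the paper's: the two local arrows ($I$-coprime $\Rightarrow$ $(I,J)$-coprime and $I$-coprime $\Rightarrow$ $I$-coreduced) use exactly the same case split on $IM=0$ or $IM=M$. Your one-line arguments for coprime $\Rightarrow$ weakly coprime (form (2) of Proposition \ref{global-cpm} plus the local computation for every $I$) and weakly coprime $\Rightarrow$ coreduced (take $J=I$) simply make self-contained what the paper cites from \cite[Corollary 5.4]{AnsariToroghy2011}. Chart \ref{dualimp} has no column for the ideal $(0:_R M)$, so your last group of arrows is not needed for this statement.
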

 \begin{table}[ht]
 \begin{center}
  \begin{tabular}[h]{cccc}
  
  &	&   \text{${M}$ is $I$-coreduced module}&  \\[6pt]
 & & \rotatebox{90}{$\Rightarrow$}&  \\[6pt]
 	&  & \text{${M}$ is a coreduced module} &  \\[6pt]
 	& & \rotatebox{90}{$\Rightarrow$} &  \\[6pt]
    
 	\text{	$M$ is coprime} &$\Rightarrow$ &\text{ $M$ is weakly coprime} \\[6pt]
    
 	$\Downarrow $&   &  $\Downarrow$ \\[6pt]
    
 	\text{	$M$ is $I$-coprime} 
 	& $\Rightarrow$ &  \text{$M$ is $(I,J)$-coprime} \\[6pt]
    
 	 $\Downarrow$ &&& \\[6pt]
 	
 	\text{$M$ is $I$-coreduced}	 &&&  
 
\end{tabular}
\end{center}
 \caption{Chart of implications between different coprime modules.}

\label{dualimp}
\end{table}
\begin{prf} We only prove the implications: $I$-coprime $\Rightarrow$ $I$-coreduced, and $I$-coprime $\Rightarrow$ $(I,J)$-coprime. The implications: coprime $\Rightarrow$ weakly coprime $\Rightarrow$ coreduced follow from \cite[Corollary 5.4]{AnsariToroghy2011}.  The remaining implications follow from known results. $I$-coprime $\Rightarrow$ $I$-coreduced: if $IM=0$, then $I^2M=I(IM)=0$. Therefore $IM=I^2M$. If $IM=M$, then $I^2M=I(IM)=IM$.   $I$-coprime $\Rightarrow$ $(I,J)$-coprime: If $IM=0$, then for any ideals $I$ and $J$ of $R$, $IJM=J(IM)\subseteq IM=0$ or if $IM=M$ then $IJM=J(IM)=JM$. So, either $IJM=IM$ or $IJM=JM$.
    
\end{prf}
\begin{paragraph}\noi
    The Examples \ref{exICp1},   \ref{exIcp3},  \ref{exIJCp-ICp} and  \ref{exIcor-Icp} demonstrate that the reverse implications in Chart \ref{dualimp} do not hold in general.
\end{paragraph}

     \begin{exam}\label{exICp1}\rm
 Let $R:=\mathbb{Z}$ and $\mathbb{Z}$-module $M:=\mathbb{Z}/6\mathbb{Z}$. If $I:=(6)$ and $J:=(2)$ are ideals of $\mathbb{Z}$, then $M$ is $I$-coprime but it is not coprime. This is because, $IM=0$ but neither $JM=0$ nor $JM=M$.
         \end{exam}

        \begin{exam}\label{exIcp3} \rm
        Let $R:=\mathbb{Z}$ and $M:=\mathbb{Z}/4\mathbb{Z}$. If $I:=(4)$  and $J:=(3)$ are ideals of $R$, then $M$ is $(I,J)$-coprime but it is not weakly coprime. Since for an ideal $K:=(2)$, $K^2M=0$ but $KM\neq 0$.
        \end{exam}
       
        \begin{exam}\label{exIJCp-ICp}\rm
 Consider the ring $R:=\mathbb{Z}$ and the $\mathbb{Z}$-module $M:=\mathbb{Z}$. If   $I:=(2)$ and $J:=\mathbb{Z}$ are ideals of $\mathbb{Z}$, then $M$ is $(I,J)$-coprime but it is not $I$-coprime. Since $IM$ is neither zero nor equal to $M$.  
         \end{exam}
    \begin{exam}\rm\label{exIcor-Icp}
If for an idempotent ideal $I$ of a ring $R$, there exists an $R$-module $M$ such that $IM\neq 0$ and $IM\neq M$, then $M$ is $I$-coreduced but it is not $I$-coprime. Take for instance, the ideal $I:=3\mathbb{Z}_6$ of a ring $R:=\mathbb{Z}_6$  and a $\mathbb{Z}_6$-module  $M:=\mathbb{Z}_6$, $IM=3\mathbb{Z}_6$
which is neither zero nor $M$, but $I^2=I$.    \end{exam}

  \begin{propn}\label{tkimages}\rm  Both  $I$-coprime $R$-modules and $(I, J)$-coprime $R$-modules are closed under taking homomorphic images.
 \end{propn}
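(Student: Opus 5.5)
The plan is to work with a surjective homomorphism $f\colon M\to N$ and to use the one fact that drives everything: for any ideal $K$ of $R$, $f(KM)=KN$. To see this, note that $f$ is $R$-linear, so $f(\sum k_i m_i)=\sum k_i f(m_i)\in KN$. Conversely, every element of $KN$ has the form $\sum k_i n_i$. Choosing preimages $m_i$ with $f(m_i)=n_i$, which surjectivity allows, gives $\sum k_i n_i=f(\sum k_i m_i)$. In particular, we may take $K=I$, $J$ or $IJ$.

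For the $I$-coprime case, assume $M$ is $I$-coprime and split according to Definition \ref{dualdef}. If $IM=0$, then $IN=f(IM)=f(0)=0$. If $IM=M$, then $IN=f(IM)=f(M)=N$, using surjectivity. In both cases $IN$ is a trivial submodule of $N$, so $N$ is $I$-coprime.

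For the $(I,J)$-coprime case, assume $M$ is $(I,J)$-coprime. If $IJM=IM$, then applying $f$ gives $IJN=f(IJM)=f(IM)=IN$. Symmetrically, if $IJM=JM$, we get $IJN=JN$. Hence $N$ is $(I,J)$-coprime.

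Since any homomorphic image of $M$ is isomorphic to the target of such a surjection, and both properties are plainly invariant under isomorphism, this settles the statement. There is no real obstacle. The only point needing care is the inclusion $KN\subseteq f(KM)$, and that is exactly where surjectivity is used. An alternative route would go through the tensor characterizations, since $R/K\otimes_R-$ is right exact, but the direct element-wise argument is shorter and avoids the slight abuse of writing "$\cong$" in those characterizations.
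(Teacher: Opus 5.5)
Your proposal is correct and is essentially the paper's argument. The paper works with a quotient $M/N$ and uses the identity $K(M/N)=(KM+N)/N$ for $K=I$, $J$, $IJ$, which is your identity $f(KM)=KN$ applied to the canonical projection $M\to M/N$; phrasing it for an arbitrary surjection makes the use of surjectivity explicit but changes nothing of substance.
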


 \begin{prf}
  Suppose that $N$  is a submodule of $M$ and $M$ is an $I$-coprime $R$-module. Either $IM=0$ or $IM=M$.  If $IM=0$, then we have $I(M/N)= (IM+N)/N = 0$. Suppose that $IM=M$. Then $I(M/N)= (IM+N)/N = M/N$. So, the factor module $M/N$ is $I$-coprime. Now suppose that $M$ is an  $(I, J)$-coprime $R$-module, i.e., for fixed ideals $I$ and $J$ of $R$, $IJM=IM$ or $IJM=JM$. If $IJM=IM$, then $IJ(M/N)= (IJM+N)/N = (IM+N)/N= I(M/N)$. Similarly, if $IJM=JM$, then $IJ(M/N)= J(M/N)$ and this completes the proof.
 \end{prf}
\section{GM Duality and MGM Equivalence}

  \begin{paragraph}\noi
   In this section, we give the $I$-prime $R$-modules with the  $I$-coprime $R$-modules version of the Greenlees-May Duality and the Matlis-Greenlees-May Equivalence. Let $(R\text{-Mod})_{I\text{-prime}}$, $(R\text{-Mod})_{I\text{-coprime}}$,
    $(R\text{-Mod})_{I\text{-red}}$ and $(R\text{-Mod})_{I\text{-cor}}$ denote the full subcategory of $R$-modules which are $I$-prime, $I$-coprime, $I$-reduced and $I$-coreduced respectively.
  \end{paragraph}

\begin{propn}\label{Ip-Icp}\rm
    If $M$ is an $I$-prime (resp. $I$-coprime) $R$-module, then the submodule of $M,~ \varGamma_I (M)$ (resp. the $R$-module $\Lambda_I(M))$ is an $I$-coprime (resp. an $I$-prime) $R$-module. In particular, the Functors   (\ref{F1}) and (\ref{F2}) below exist.

    \begin{equation}\label{F1}
      \Gamma_I:(R\text{-Mod})_{I\text{-prime}}\longrightarrow(R\text{-Mod})_{I\text{-coprime}}
    \end{equation} 
    \begin{equation}\label{F2}
    \Lambda_I :(R\text{-Mod})_{I\text{-coprime}}\longrightarrow(R\text{-Mod})_{I\text{-prime}}.
     \end{equation}
\end{propn}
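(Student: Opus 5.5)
The plan is to reduce both halves of the statement to the fact, noted in the text before Definition \ref{dualdef}, that an $R$-module $N$ with $IN=0$ is simultaneously $I$-prime and $I$-coprime. So it suffices to show that $I\varGamma_I(M)=0$ when $M$ is $I$-prime, and that $I\Lambda_I(M)=0$ when $M$ is $I$-coprime.

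For the first half, let $M$ be $I$-prime. By Chart \ref{imp} (Proposition \ref{diagram1}), an $I$-prime module is $I$-reduced, since $0$ is then an $I$-prime submodule of $M$. Proposition \ref{Irem} $(1)\Rightarrow(5)$ then gives $I\varGamma_I(M)=0$, so $\varGamma_I(M)$ is $I$-coprime.

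As a sanity check, I would also argue directly using Proposition \ref{Iprimep1}. Either $(0:_M I)=M$, in which case $IM=0$ and so $\varGamma_I(M)=M$ with $I\varGamma_I(M)=0$. Or $(0:_M I)=0$. In that case an induction shows $\varGamma_I(M)=0$: if $I^k m=0$ with $k\ge 2$, then $I^{k-1}m\subseteq (0:_M I)=0$, and we descend to $k=1$, giving $m=0$.

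For the second half, let $M$ be $I$-coprime. Chart \ref{dualimp} (Proposition \ref{diagram2}) gives that $M$ is $I$-coreduced. Proposition \ref{Icorem} $(1)\Rightarrow(5)$ then yields $I\Lambda_I(M)=0$, so $\Lambda_I(M)$ is $I$-prime.

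Directly, I would split into the two cases of Definition \ref{dualdef}:
\begin{enumerate}
\item If $IM=0$, then $I^kM=0$ for all $k$. The inverse system $M/I^kM$ is then constant with identity transition maps, so $\Lambda_I(M)\cong M$, which is annihilated by $I$.
\item If $IM=M$, then $I^kM=M$ for all $k$. Every $M/I^kM$ vanishes, so $\Lambda_I(M)=0$, which is trivially $I$-prime.
\end{enumerate}

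Finally, the subcategories involved are full subcategories of $R$-Mod. Hence the restrictions of the functors $\varGamma_I$ and $\Lambda_I$ are automatically functors (\ref{F1}) and (\ref{F2}) once their values on objects land in the target subcategories. No step here is a real obstacle. The only point needing care is the identification of the limit in the case $IM=0$, and that is immediate because the system is constant.
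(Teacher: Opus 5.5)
Your proof is correct and is essentially the paper's argument: $I$-prime $\Rightarrow$ $I$-reduced $\Rightarrow$ $I\varGamma_I(M)=0$ via Proposition \ref{Irem}, and $I$-coprime $\Rightarrow$ $I$-coreduced $\Rightarrow$ $I\Lambda_I(M)=0$ via Proposition \ref{Icorem}, after which a module annihilated by $I$ is both $I$-prime and $I$-coprime. Your extra direct case checks are also correct and make the argument self-contained: they show that $\varGamma_I(M)$ is either $M$ or $0$, and that $\Lambda_I(M)$ is, up to isomorphism, either $M$ or $0$.
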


\begin{prf}
    If $M$ is $I$-prime, then by Chart \ref{imp}, $M$ is $I$-reduced. By \cite[Proposition 2.2]{SsevviiriI2024}, $I\Gamma_I (M)=0$. It follows by Definition \ref{dualdef} that $\Gamma_I (M)$ is an $I$-coprime $R$-module.    Suppose that $M$ is an $I$-coprime $R$-module. By Chart \ref{dualimp}, $M$ is $I$-coreduced. It follows from \cite[Proposition 2.3]{SsevviiriI2024} that $I\Lambda_I(M)=0$. It follows that, $(0:_{\Lambda_I(M)} I)=\Lambda_I(M)$. By Proposition \ref{Iprimep1}, $\Lambda_I(M)$ is $I$-prime.
\end{prf}

\begin{paragraph}\noi

In \cite[Theorem  3.4]{SsevviiriI2024}, it was proved that the functors \begin{equation}\label{ItortionF}
    \varGamma_I:(R\text{-Mod})_{I\text{-red}}\longrightarrow(R\text{-Mod})_{I\text{-cor}}
\end{equation}
and \begin{equation}\label{IadcF}
    \Lambda_I:(R\text{-Mod})_{I\text{-cor}}\longrightarrow(R\text{-Mod})_{I\text{-red}}
\end{equation} 
exist and form an adjoint pair, with $\varGamma_I$ the right adjoint of $\Lambda_I$. We now have Theorem \ref{GM} which is a restriction of \cite[Theorem 3.4]{SsevviiriI2024} on locally prime modules and locally coprime modules.
\end{paragraph}

  \begin{thm}[GM Duality for locally prime modules]\label{GM}  The  adjointness of the  functors (\ref{ItortionF}) and (\ref{IadcF}) restricts to $I$-prime $R$-modules and $I$-coprime $R$-modules as shown in Chart \ref{GM duality}.

  \begin{table}[h]
 \begin{center}
  \begin{tabular}[h]{ccc}
    
 $(R\text{-Mod})_{I\text{-red}}$
&$\;\mathrel{\substack{\xlongrightarrow{\;\;\;\varGamma_I\;\;\;} \\[0.3em] \xlongleftarrow[\;\;\;\Lambda_I\;\;\;]{}}}\;$ & $(R\text{-Mod})_{I\text{-cor}}$

\\[1.5em]
\rotatebox{90}{$\subseteq$} && \rotatebox{90}{$\subseteq$}
\\[0.4em]
 $(R\text{-Mod})_{I\text{-prime}}$
&$\;\mathrel{\substack{\xlongrightarrow{\;\;\;\varGamma_I\;\;\;} \\[0.3em] \xlongleftarrow[\;\;\;\Lambda_I\;\;\;]{}}}\;$&
$(R\text{-Mod})_{I\text{-coprime}}$

 \end{tabular}
\end{center}
 \caption{Chart of adjoints.}

\label{GM duality}
\end{table}

       \end{thm}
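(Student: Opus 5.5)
The approach is to reduce everything to the known adjunction of \cite[Theorem 3.4]{SsevviiriI2024} between the functors (\ref{ItortionF}) and (\ref{IadcF}), and then show that it restricts. The general principle is this. Suppose $F\dashv G$ is an adjunction between categories $\mathcal{C}$ and $\mathcal{D}$, and $\mathcal{C}'\subseteq\mathcal{C}$ and $\mathcal{D}'\subseteq\mathcal{D}$ are \emph{full} subcategories with $F(\mathcal{D}')\subseteq\mathcal{C}'$ and $G(\mathcal{C}')\subseteq\mathcal{D}'$. Then the natural bijection
\[
\text{Hom}_{\mathcal{C}}(F(M),N)\cong \text{Hom}_{\mathcal{D}}(M,G(N))
\]
restricts to $M\in\mathcal{D}'$ and $N\in\mathcal{C}'$. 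Fullness means the Hom sets are unchanged, and naturality is inherited. So the proof consists of checking the inclusions and the functor-preservation conditions.

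First, I verify the vertical inclusions in Chart \ref{GM duality}.
\begin{itemize}
\item $(R\text{-Mod})_{I\text{-prime}}\subseteq(R\text{-Mod})_{I\text{-red}}$ follows from Chart \ref{imp}. Proposition \ref{diagram1} proved that $I$-prime implies $I$-reduced, applied with $P=0$.
\item $(R\text{-Mod})_{I\text{-coprime}}\subseteq(R\text{-Mod})_{I\text{-cor}}$ follows from Chart \ref{dualimp}, via Proposition \ref{diagram2}.
\end{itemize}
Both are full subcategories by definition.

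Second, I check that the functors restrict. Proposition \ref{Ip-Icp} already shows that $\Gamma_I$ sends $I$-prime modules to $I$-coprime modules, using $I\Gamma_I(M)=0$. It also shows that $\Lambda_I$ sends $I$-coprime modules to $I$-prime modules, using $I\Lambda_I(M)=0$. On morphisms, the restricted functors are just the original $\Gamma_I$ and $\Lambda_I$, so the squares with the inclusions commute.

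Third, I apply the principle above. For $M$ $I$-coprime and $N$ $I$-prime, the bijection
\[
\text{Hom}_R(\Lambda_I(M),N)\cong\text{Hom}_R(M,\Gamma_I(N))
\]
from \cite[Theorem 3.4]{SsevviiriI2024} holds, because $M$ is $I$-coreduced and $N$ is $I$-reduced. It is natural in $M$ and $N$, and all the objects involved lie in the smaller subcategories. Alternatively, I restrict the unit $M\to\Gamma_I\Lambda_I(M)$ and the counit $\Lambda_I\Gamma_I(N)\to N$. These have source and target in the subcategories by the second step, and the triangle identities are inherited.

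There is no substantial obstacle; the whole argument is formal. The only point requiring care is that the subcategories are full and are preserved by both functors, so that the restricted unit and counit are morphisms of the subcategories. The second step supplies this through Proposition \ref{Ip-Icp}.
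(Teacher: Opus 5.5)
Your proof is correct, and it differs only mildly from the paper's. You read the statement literally: you take the adjunction $\Lambda_I\dashv\Gamma_I$ of \cite[Theorem 3.4]{SsevviiriI2024} as given and use the formal fact that an adjunction restricts to full subcategories which the two functors map into each other. So your only real inputs are Proposition \ref{Ip-Icp} and the inclusions from Charts \ref{imp} and \ref{dualimp}.

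The paper also starts from Proposition \ref{Ip-Icp}, but it does not cite Theorem 3.4. Instead, it rebuilds the adjunction on the smaller subcategories. For $I$-reduced $N$ and $I$-coreduced $M$, Propositions \ref{Irem} and \ref{Icorem} give natural isomorphisms $\Gamma_I(N)\cong\text{Hom}_R(R/I,N)$ and $\Lambda_I(M)\cong R/I\otimes_R M$. The Hom--tensor adjunction then yields
\[
\text{Hom}_R(\Lambda_I(M),N)\cong\text{Hom}_R(R/I\otimes_R M,N)\cong\text{Hom}_R(M,\text{Hom}_R(R/I,N))\cong\text{Hom}_R(M,\Gamma_I(N)).
\]

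Your route is cleaner and isolates exactly what must be checked: fullness, and closure of the subcategories under $\Gamma_I$ and $\Lambda_I$. The paper's route does not depend on the earlier theorem and identifies the bijection concretely as the Hom--tensor adjunction. That identification is also what underlies the paper's remark that $\Lambda_I$ becomes right exact on these subcategories.
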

       \begin{prf}   The restrictions of  $\Gamma_I$ and $\Lambda_I$  to these subcategories are well defined, see    
       Proposition \ref{Ip-Icp}.
           If $M$ is $I$-prime, then by Chart \ref{imp} it is $I$-reduced. So, by \cite[Proposition 2.2]{SsevviiriI2024}, $\varGamma_I(M)\cong \text{Hom}(R/I,M)$ a natural isomorphism in $M$. On the other hand, if $M$ is $I$-coprime, then by Chart \ref{dualimp} it is $I$-coreduced and by \cite[Proposition 2.3]{SsevviiriI2024}, $\Lambda_I(M)\cong R/I\otimes M$ is also a natural isomorphism in $M$. However, it is a well known fact that $\text{Hom}(R/I,-)$ is  right adjoint to $R/I\otimes-$. It then follows that, $\varGamma_I$ given in (\ref{F1}) is  right adjoint to $\Lambda_I$ given in (\ref{F2}). 
       \end{prf}

       \begin{exam}\rm  For a fixed ideal $I$ of a ring $R$, choose two $R$-modules $M$ and $N$ such that $IM=IN=0$. Then $M$ and $N$ are both $I$-prime and $I$-coprime and hence both are $I$-reduced and $I$-coreduced. So, $\Gamma_I(M)= (0:_MI)=M$ and $\Lambda_I(N)\cong N/IN= N$. It follows that $M$ and $N$ satisfy the adjunction $\text{Hom}_R(\Lambda_I(N), M)\cong \text{Hom}_R(N, \Gamma_I(M))$. 
           
       \end{exam}

       \begin{paragraph} \noi
       The Matlis-Greenlees-May (MGM) Equivalence asks for when the subcategory of $I$-torsion $R$-modules is equivalent to the subcategory of $I$-complete $R$-modules. In Theorem \ref{MGM}, we give conditions in terms of locally prime modules and locally coprime modules for which this equivalence holds. This generalizes \cite[Theorem 4.3]{SsevviiriI2024}.
      \end{paragraph}

      \begin{thm}\label{MGM}
      For any fixed ideal $I$ of a ring $R$ and any $R$-module $M,$ the following statements are equivalent:\begin{enumerate}
          \item $M$ is $I$-prime and $I$-torsion,
          \item $M$ is $I$-coprime and $I$-adically complete,
          \item $M$ is $I$-reduced and $I$-torsion,
          \item $M$ is $I$-coreduced and $I$-adically complete,
          \item $IM=0$.
      \end{enumerate}
      \end{thm}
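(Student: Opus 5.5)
The plan is to route every statement through condition (5), $IM=0$. The reduced/coreduced conditions (3) and (4) are already handled by \cite[Theorem 4.3]{SsevviiriI2024}. Still, each step is short enough to check directly from Propositions \ref{Irem} and \ref{Icorem} together with Charts \ref{imp} and \ref{dualimp}.

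\emph{First, $(5)$ implies all of $(1)$--$(4)$.} Suppose $IM=0$. Then $(0:_M I)=M$, so $M$ is $I$-prime by Proposition \ref{Iprimep1}. It is also $I$-coprime directly from Definition \ref{dualdef}. By Charts \ref{imp} and \ref{dualimp}, $M$ is then $I$-reduced and $I$-coreduced. Since $(0:_MI)\subseteq \Gamma_I(M)\subseteq M$, we get $\Gamma_I(M)=M$, so $M$ is $I$-torsion. Since $I^kM=0$ for every $k\geq 1$, the inverse system $M/I^kM$ is constant with value $M$. Hence $\Lambda_I(M)\cong M$ and $M$ is $I$-adically complete.

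\emph{Second, each of $(1)$--$(4)$ implies $(5)$.} The implications $(1)\Rightarrow(3)$ and $(2)\Rightarrow(4)$ are immediate from the charts, so it suffices to handle $(3)$ and $(4)$.
\begin{itemize}
\item For $(3)\Rightarrow(5)$: $M$ is $I$-reduced, so Proposition \ref{Irem}(5) gives $I\Gamma_I(M)=0$. Since $M$ is $I$-torsion, $\Gamma_I(M)=M$, and therefore $IM=0$.
\item For $(4)\Rightarrow(5)$: $M$ is $I$-coreduced, so Proposition \ref{Icorem}(5) gives $I\Lambda_I(M)=0$. Let $\varphi\colon M\to\Lambda_I(M)$ be the isomorphism provided by $I$-adic completeness. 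It is $R$-linear, so $\varphi(IM)=I\Lambda_I(M)=0$, and injectivity gives $IM=0$.
\end{itemize}

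The only point needing care is the meaning of ``$I$-adically complete''. The paper defines it as $\Lambda_I(M)\cong M$, meaning an abstract isomorphism, not necessarily the canonical map $M\to\Lambda_I(M)$. The argument in $(4)\Rightarrow(5)$ only transports the annihilation condition $I(-)=0$, which is invariant under any $R$-module isomorphism, so the abstract isomorphism suffices and no canonicity is needed. With this observation, $(1)$--$(5)$ are equivalent.
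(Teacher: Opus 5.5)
Your proof is correct and follows the paper's argument. Like the paper, you route everything through $IM=0$, use Charts \ref{imp} and \ref{dualimp} for $(1)\Rightarrow(3)$ and $(2)\Rightarrow(4)$, and check $(5)\Rightarrow(1),(2)$ directly. The only difference is that the paper cites \cite[Proposition 4.2]{SsevviiriII2025} for $(3)\Leftrightarrow(5)$ and $(4)\Leftrightarrow(5)$, whereas you derive $(3)\Rightarrow(5)$ and $(4)\Rightarrow(5)$ from Propositions \ref{Irem}(5) and \ref{Icorem}(5); this makes your version self-contained, and your transport of $I\Lambda_I(M)=0$ along an arbitrary isomorphism correctly covers the paper's abstract-isomorphism notion of $I$-adic completeness.
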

      \begin{prf} 
      Since by Chart \ref{imp}, Chart \ref{dualimp}  and  \cite[Proposition 4.2]{SsevviiriII2025}, for any $R$-module $M$, ($I$-prime + $I$-torsion) $\Rightarrow  $ ($I$-reduced + $I$-torsion) $\Leftrightarrow$ $IM=0$, and ($I$-coprime + $I$-adically complete) $\Rightarrow  $ ($I$-coreduced + $I$-adically complete) $\Leftrightarrow$ $IM=0$, it is enough to show that if $IM=0$, then $M$ is both $I$-prime and $I$-torsion and also $M$ is both $I$-coprime and $I$-adically complete.  
       Suppose that $IM=0$, then $\varGamma_I(M)=M$ and $(0:_M I)=M$. So, $M$ is $I$-prime and $I$-torsion. Again, let $IM=0$. By definition, $M$ is $I$-coprime. Furthermore, $I^kM=0$ for all $k\in \mathbb{Z^+}$. So, $\Lambda_I(M)\cong M$ and $M$ is $I$-adically complete. 
  \end{prf}

\section{More functorial properties}

\begin{propn}\rm\label{Hom-reversed1}
If $M$ is an $(I,J)$-coprime $R$-module, then for any $R$-module $N$, $\text{Hom}_R(M,N)$ is an $(I,J)$-prime $R$-module. The converse holds if $N$ is an injective cogenerator. 
\end{propn}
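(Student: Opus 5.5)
The plan is to translate the annihilator conditions on elements of $\text{Hom}_R(M,N)$ into vanishing conditions on submodules of $M$. For an ideal $K$ of $R$ and $f\in \text{Hom}_R(M,N)$, the $R$-action is $(rf)(m)=f(rm)$. Hence $Kf=0$ if and only if $f(rm)=0$ for all $r\in K$ and $m\in M$. Since $KM$ is generated by the elements $rm$, this says
\[
Kf=0 \iff f(KM)=0 .
\]
I will apply this observation with $K=I$, $K=J$ and $K=IJ$ throughout.

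For the forward direction, let $M$ be $(I,J)$-coprime and let $f\in\text{Hom}_R(M,N)$ satisfy $IJf=0$. By the observation, $f(IJM)=0$. If $IJM=IM$, then $f(IM)=0$, so $If=0$. If $IJM=JM$, then $f(JM)=0$, so $Jf=0$. Thus $\text{Hom}_R(M,N)$ is $(I,J)$-prime for every $R$-module $N$.

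For the converse, let $N$ be an injective cogenerator with $\text{Hom}_R(M,N)$ $(I,J)$-prime, and suppose for contradiction that $IJM\neq IM$ and $IJM\neq JM$. Since $IJM\subseteq IM\cap JM$, the module $X:=M/IJM$ contains a nonzero element $a\in IM/IJM$ and a nonzero element $b\in JM/IJM$. Because $N$ is a cogenerator, there are $g_1,g_2\in\text{Hom}_R(X,N)$ with $g_1(a)\neq 0$ and $g_2(b)\neq 0$. I then build a single $g\in\text{Hom}_R(X,N)$ with $g(a)\neq0$ and $g(b)\neq0$:
\begin{enumerate}
\item if $g_1(b)\neq 0$, take $g=g_1$;
\item otherwise, if $g_2(a)\neq 0$, take $g=g_2$;
\item otherwise $g_1(b)=0$ and $g_2(a)=0$, and $g=g_1+g_2$ satisfies $g(a)=g_1(a)\neq0$ and $g(b)=g_2(b)\neq0$.
\end{enumerate}

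Let $f$ be the composite $M\to M/IJM\xrightarrow{g}N$. Then $f(IJM)=0$, so $IJf=0$ by the observation. On the other hand, $f(IM)\ni g(a)\neq 0$ and $f(JM)\ni g(b)\neq0$, so $If\neq0$ and $Jf\neq 0$. This contradicts $\text{Hom}_R(M,N)$ being $(I,J)$-prime, so $M$ is $(I,J)$-coprime.

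The main obstacle is producing one homomorphism that is simultaneously nonzero on both $IM/IJM$ and $JM/IJM$. The cogenerator property only gives separate maps for $a$ and $b$, and the case analysis in the list above is what combines them. Only the cogenerator property of $N$ is used; injectivity is not needed for this argument.
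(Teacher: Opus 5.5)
Your proof is correct, but it takes a more elementary, element-wise route than the paper. The paper argues functorially. It writes the Hom--tensor chain $\text{Hom}_R(R/I,\text{Hom}_R(M,N))\cong\text{Hom}_R(R/I\otimes_R M,N)\cong\text{Hom}_R(R/IJ\otimes_R M,N)\cong\text{Hom}_R(R/IJ,\text{Hom}_R(M,N))$ and reads off $(I,J)$-primeness via Proposition~\ref{IJPropn1}. For the converse it runs the chain backwards and uses that $\text{Hom}_R(-,N)$ reflects isomorphisms when $N$ is an injective cogenerator.

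Your observation $Kf=0\iff f(KM)=0$ is exactly the element-level content of the adjunction isomorphism $(0:_{\text{Hom}_R(M,N)}K)\cong\text{Hom}_R(M/KM,N)$. So the two forward directions agree in substance.

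The converse is where the routes differ. The paper's argument needs the isomorphisms to be the natural ones: only then does it get equalities of annihilator submodules, and only then can it apply reflection to the projection $M/IJM\to M/IM$. It also delegates the ``either/or'' to Proposition~\ref{IJPropn1}, whose ``elementary'' proof hides the fact that a group is not the union of two proper subgroups. Your three-case construction of $g$ is precisely that fact, applied to the subgroups $\{g : g(a)=0\}$ and $\{g : g(b)=0\}$ of $\text{Hom}_R(X,N)$. It uses only the separating property of a cogenerator.

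Your version is therefore self-contained and shows that injectivity of $N$ is superfluous. This is also true of the paper's route: for any cogenerator $N$, $\text{Hom}_R(-,N)$ is faithful, so it reflects monomorphisms and epimorphisms, and a monic epimorphism in $R$-Mod is an isomorphism. The paper's version buys brevity and fits its theme of characterizing locally prime and coprime modules through $\text{Hom}_R(R/I,-)$ and $R/I\otimes_R-$.
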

\begin{prf}
   Let $I$ and $J$ be any ideals of $R$ and suppose that $M$ is $(I,J)$-coprime. Utilizing the Hom-tensor adjunction and Propostions \ref{IJPropn1}, we have \\$\text{Hom}_R(R/I,\text{Hom}_R(M,N))\cong \text{Hom}_R(R/I \otimes M,N)\cong \text{Hom}_R(R/IJ \otimes M,N) \cong \\\text{Hom}_R(R/IJ,\text{Hom}_R(M,N)) $. Hence $\text{Hom}_R(M,N)$ is $(I,J)$-prime. For the converse, $\text{Hom}_R(R/I \otimes M,N)\cong \text{Hom}_R(R/I,\text{Hom}_R(M,N))\cong \text{Hom}_R(R/IJ,\text{Hom}_R(M,N))\cong \text{Hom}_R(R/IJ \otimes M,N)$. By the hypothesis, $N$ is an injective cogenerator. So, $\text{Hom}_R(-,N)$ reflects isomorphisms and $R/I\otimes M\cong R/IJ\otimes M$. Therefore, $M$ is $(I,J)$-coprime.
\end{prf}
\begin{propn}\rm\label{Hom-reversd2}
Let $I$ and $J$ be finitely generated ideals  of $R$ and $N$ be an injective $R$-module. If $M$ is  an $(I,J)$-prime $R$-module, then the $R$-module $\text{Hom}_R(M,N)$ is $(I,J)$-coprime.
\end{propn}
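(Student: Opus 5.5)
The plan is to dualize the argument of Proposition \ref{Hom-reversed1}, replacing the Hom-tensor adjunction with the standard isomorphism for an injective module $N$ and a finitely presented module $F$:
\[
F\otimes_R \text{Hom}_R(M,N)\cong \text{Hom}_R(\text{Hom}_R(F,M),N),
\]
natural in $F$ and $M$. Since $I$ and $J$ are finitely generated, so is $IJ$, and hence $R/I$, $R/J$ and $R/IJ$ are finitely presented. This justifies applying the isomorphism with $F\in\{R/I,R/J,R/IJ\}$.

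First, assume $M$ is $(I,J)$-prime. By Proposition \ref{IJPropn1}, without loss of generality $(0:_M IJ)=(0:_M I)$, with the other case symmetric. This means the map $\text{Hom}_R(R/IJ,M)\to\text{Hom}_R(R/I,M)$ is an isomorphism. More precisely, the natural inclusion $(0:_M I)\hookrightarrow (0:_M IJ)$ is an equality, and it is the map induced by the projection $\pi\colon R/IJ\to R/I$.

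Next, apply $\text{Hom}_R(-,N)$ and use naturality in $F$ of the isomorphism above. This shows that the map
\[
\pi\otimes 1\colon R/IJ\otimes_R \text{Hom}_R(M,N)\to R/I\otimes_R\text{Hom}_R(M,N)
\]
is an isomorphism. Writing $H=\text{Hom}_R(M,N)$, this map is the canonical surjection $H/IJH\to H/IH$. Its being injective gives $IH\subseteq IJH$, hence $IJH=IH$. In the other case we get $IJH=JH$. Therefore $H$ is $(I,J)$-coprime.

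The main obstacle is the naturality step. It is not enough to know that $R/IJ\otimes H\cong R/I\otimes H$ abstractly; we need the canonical map to be the isomorphism. So I would check carefully that the evaluation map $\theta_F\colon F\otimes\text{Hom}(M,N)\to\text{Hom}(\text{Hom}(F,M),N)$, given by $f\otimes g\mapsto(\phi\mapsto g\phi(f))$, is natural in $F$. I would also check that it is an isomorphism for finitely presented $F$ when $N$ is injective. The proof of this is standard: the map is an isomorphism for $F=R^n$, and then one takes a presentation $R^m\to R^n\to F\to 0$ and applies the five lemma. Right exactness of $-\otimes H$ and exactness of $\text{Hom}(\text{Hom}(-,M),N)$ on the right come from left exactness of $\text{Hom}(-,M)$ and injectivity of $N$. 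The finite generation hypothesis on $I$ and $J$ is used precisely here.
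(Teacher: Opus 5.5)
Your proof is correct and follows the paper's route: apply the tensor-evaluation isomorphism $F\otimes_R\text{Hom}_R(M,N)\cong\text{Hom}_R(\text{Hom}_R(F,M),N)$, valid for finitely presented $F$ and injective $N$ (the paper cites Schenzel--Simon, Lemma 1.46), with $F=R/I,\,R/J,\,R/IJ$, after translating $(I,J)$-primeness via Proposition \ref{IJPropn1}. The naturality check you flag as the main obstacle is not actually needed, because any $R$-linear isomorphism $R/IJ\otimes_R H\cong R/I\otimes_R H$ already forces $I(H/IJH)=0$, i.e.\ $IH\subseteq IJH$; so the abstract isomorphism the paper writes down suffices.
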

\begin{prf} By \cite[Lemma 1.46] {SchenzelSimon2018}, for any finitely generated ideals $I$ and $J$ of $R$,
$R/I\otimes_R \text{Hom}_R(M,N)\cong \text{Hom}_R(\text{Hom}_R(R/I,M),N)\cong \text{Hom}_R(\text{Hom}_R(R/IJ,M),N)$ $\cong R/IJ~ \otimes_R$ $\text{Hom}_R(M,N)$ or $R/J~\otimes_R \text{Hom}_R(M,N)\cong \text{Hom}_R(\text{Hom}_R(R/J,M),N)\cong \text{Hom}_R(\text{Hom}_R(R/IJ,M),N)$ $\cong R/IJ~ \otimes_R$ $\text{Hom}_R(M,N)$. So, $\text{Hom}_R(M,N)$ is $(I,J)$-coprime.
\end{prf}
\begin{propn}\label{tklocalisations}\rm
    If $M$ is an $I$-prime (resp. $(I,J)$-prime, $I$-coprime and $(I,J)$-coprime), then for all prime ideals $\mathcal{P}$ of $R$, the $R_\mathcal{P}$-module $M_\mathcal{P}$ is also $I_\mathcal{P}$-prime (resp. $(I_\mathcal{P},J_\mathcal{P})$-prime, $I_\mathcal{P}$-coprime and $(I_\mathcal{P},J_\mathcal{P})$-coprime).
\end{propn}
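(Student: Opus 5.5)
The plan is to work directly from the characterizations already proved: Proposition \ref{Iprimep1} and Proposition \ref{IJPropn1} for the prime-type notions, and Definition \ref{dualdef} for the coprime-type notions. We combine these with the standard facts that localization at $\mathcal{P}$ is exact and that $(IN)_\mathcal{P}=I_\mathcal{P}N_\mathcal{P}$ for every submodule $N$. The coprime cases are the easy half, so we do them first. If $IM=0$ then $I_\mathcal{P}M_\mathcal{P}=(IM)_\mathcal{P}=0$, and if $IM=M$ then $I_\mathcal{P}M_\mathcal{P}=M_\mathcal{P}$; hence $M_\mathcal{P}$ is $I_\mathcal{P}$-coprime. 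Similarly, $IJM=IM$ (resp. $IJM=JM$) localizes to $I_\mathcal{P}J_\mathcal{P}M_\mathcal{P}=I_\mathcal{P}M_\mathcal{P}$ (resp. $=J_\mathcal{P}M_\mathcal{P}$), using $(IJ)_\mathcal{P}=I_\mathcal{P}J_\mathcal{P}$. Equivalently, one can note that $R/I\otimes_R M$ localizes to $R_\mathcal{P}/I_\mathcal{P}\otimes_{R_\mathcal{P}} M_\mathcal{P}$ and apply the tensor characterizations.

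For the prime cases we use the submodule characterizations $(0:_M I)\in\{0,M\}$ and $(0:_M IJ)=(0:_M I)$ or $(0:_M IJ)=(0:_M J)$. The aim is to show that these annihilator submodules commute with localization, that is,
\[
(0:_M I)_\mathcal{P}=(0:_{M_\mathcal{P}} I_\mathcal{P}).
\]
This is the statement $\text{Hom}_R(R/I,M)_\mathcal{P}\cong \text{Hom}_{R_\mathcal{P}}(R_\mathcal{P}/I_\mathcal{P},M_\mathcal{P})$, which holds when $R/I$ is finitely presented, i.e. when $I$ is finitely generated. Granting this identity, the rest is routine. If $(0:_M I)=M$, then $I_\mathcal{P}M_\mathcal{P}=0$. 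If $(0:_M I)=0$, then $(0:_{M_\mathcal{P}}I_\mathcal{P})=0$. In the $(I,J)$ case the same identity applied to $I$, $J$ and $IJ$ transports the equality of annihilators.

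The main obstacle is exactly this identity. Elementwise, suppose $I_\mathcal{P}(m/s)=0$. Then for each $a\in I$ there is some $t_a\notin\mathcal{P}$ with $t_a a m=0$. If $I=(a_1,\dots,a_n)$, then $t=t_{a_1}\cdots t_{a_n}\notin\mathcal{P}$ satisfies $I(tm)=0$. Thus $tm\in(0:_M I)$, which gives the inclusion $(0:_{M_\mathcal{P}} I_\mathcal{P})\subseteq (0:_M I)_\mathcal{P}$; the reverse inclusion is trivial. Without finite generation the elements $t_a$ need not have a common multiple outside $\mathcal{P}$, and the argument breaks down.

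So I would first try to prove the prime cases with the tacit assumption that $I$ and $J$ are finitely generated, as in Proposition \ref{Hom-reversd2}; this is automatic if $R$ is Noetherian. I would then check whether the general case can be rescued. One attempt is to observe that $M_\mathcal{P}$ is $I_\mathcal{P}$-prime if and only if it is $I'_\mathcal{P}$-prime for a suitable finitely generated $I'\subseteq I$ with $I'_\mathcal{P}=I_\mathcal{P}$. Failing that, I would state the finite generation hypothesis explicitly for the prime halves.
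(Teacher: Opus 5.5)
Your route is the same as the paper's. The paper localizes the Hom characterizations of Propositions \ref{Iprimep1} and \ref{IJPropn1}, via $(0:_MI)\cong\text{Hom}_R(R/I,M)$, by citing Rotman's Lemma 4.87. It handles the coprime notions through their tensor characterizations and Rotman's Proposition 4.84. Your coprime halves are valid for arbitrary ideals. Your elementwise proof of $(0:_MI)_{\mathcal P}=(0:_{M_{\mathcal P}}I_{\mathcal P})$ for finitely generated $I$ is correct. The one difference is that you make explicit what the paper leaves tacit: the localization of $\text{Hom}_R(R/I,M)$ it relies on is valid when $R/I$ is finitely presented, i.e.\ $I$ is finitely generated (likewise for $J$ and $IJ$), and not in general.

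Do not try to remove that hypothesis, because the prime halves of the statement are false without it. Take $R=k[x_1,x_2,\dots,y_1,y_2,\dots]$ and $I=\mathcal P=(y_1,y_2,\dots)$, which is prime since $R/\mathcal P\cong k[x_1,x_2,\dots]$. Let $K=(x_1y_1,x_2y_2,\dots)\subseteq\mathcal P$ and $M=R\oplus R/K$.

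First, $M$ is $I$-prime. Suppose $fI\subseteq K$, and pick $n$ with $x_n$ not occurring in $f$. Since $K$ is a monomial ideal, $fy_n\in K$ forces each term of $f$ to be divisible by some $x_iy_i$ with $i\neq n$, so $f\in K$. Hence $(K:_RI)=K$, so $(0:_MI)=0$.

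Second, the localization is not $I_{\mathcal P}$-prime. We have $y_n=x_ny_n/x_n\in K_{\mathcal P}$, so $K_{\mathcal P}=\mathcal PR_{\mathcal P}$ and $M_{\mathcal P}=R_{\mathcal P}\oplus\kappa(\mathcal P)$. Then $(0:_{M_{\mathcal P}}I_{\mathcal P})=0\oplus\kappa(\mathcal P)$, which is neither $0$ nor $M_{\mathcal P}$.

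The $(I,J)$ case fails the same way. Adjoin one more variable $z$ and put $I=(y_1,y_2,\dots)R[z]$, $J=(z)$ and $\mathcal Q=(y_1,y_2,\dots,z)$. The same monomial argument shows that $R[z]/KR[z]\oplus R[z]/(z)$ is $(I,J)$-prime. Its localization at $\mathcal Q$ is not $(I_{\mathcal Q},J_{\mathcal Q})$-prime: the element $(1,1)$ is killed by $I_{\mathcal Q}J_{\mathcal Q}$ but by neither $I_{\mathcal Q}$ nor $J_{\mathcal Q}$.

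In the first example $I_{\mathcal P}=\mathcal PR_{\mathcal P}$ is not finitely generated. So your proposed reduction to a finitely generated $I'\subseteq I$ with $I'_{\mathcal P}=I_{\mathcal P}$ is not available.

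The correct statement is the one you propose. The coprime halves hold as stated, and the prime halves hold for finitely generated $I$ and $J$ (e.g.\ when $R$ is Noetherian). Under that hypothesis your proof, like the paper's, is complete.
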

\begin{prf}
Suppose that $M$ is an $I$-prime $R$-module. By Proposition \ref{Iprimep1}, either $\text{Hom}_R(R/I, M)=0$ or $\text{Hom}_R(R/I, M)=M$. By \cite[Lemma 4.87]{Rotman2009}, either \\ $\text{Hom}_{R_{\mathcal{P}}}(R_{\mathcal{P}}/I_{\mathcal{P}}, M_{\mathcal{P}})=0$ or $\text{Hom}_{R_{\mathcal{P}}}(R_{\mathcal{P}}/I_{\mathcal{P}}, M_{\mathcal{P}})=M_{\mathcal{P}}$.
Suppose that $M$ is an $(I, J)$-prime  $R$-module. By Proposition \ref{IJPropn1}, for any ideals $I$ and $J$ of $R$,  either $\text{Hom}_R(R/IJ, M)\cong\text{Hom}_R(R/I, M)$ or $\text{Hom}_R(R/IJ, M)\cong \text{Hom}_R(R/J, M)$. By \cite[Lemma $4.87$]{Rotman2009}, either $\text{Hom}_{R_{\mathcal{P}}}(R_{\mathcal{P}}/I_{\mathcal{P}}J_{\mathcal{P}}, M_{\mathcal{P}})\cong \text{Hom}_{R_{\mathcal{P}}}(R_{\mathcal{P}}/I_{\mathcal{P}}, M_{\mathcal{P}})$ or \\$\text{Hom}_{R_{\mathcal{P}}}(R_{\mathcal{P}}/I_{\mathcal{P}}J_\mathcal{P}, M_{\mathcal{P}})=\text{Hom}_{R_{\mathcal{P}}}(R_{\mathcal{P}}/J_{\mathcal{P}}, M_{\mathcal{P}})$. Now, suppose that $M$ is an $I$-coprime $R$-module. Either  $R/I\otimes_R M=0$ or $R/I\otimes_R M=M$. By \cite[Proposition 4.84]{Rotman2009},  either  $R_{\mathcal{P}}/I_{\mathcal{P}}\otimes_{R_{\mathcal{P}}} M_{\mathcal{P}}=0$ or $R_{\mathcal{P}}/I_{\mathcal{P}}\otimes_{R_{\mathcal{P}}} M_{\mathcal{P}}=M_{\mathcal{P}}$. Suppose that $M$ is an $(I,J)$-coprime $R$-module. For any ideals $I$ and $J$ of $R$, either $R/IJ\otimes_R M=R/I\otimes_R M$ or $R/IJ\otimes_R M=R/J\otimes_R M$. By \cite[Proposition 4.84]{Rotman2009},   $R_{\mathcal{P}}/I_\mathcal{P}J_\mathcal{P}~\otimes_{R_\mathcal{P}} M_\mathcal{P}=R_\mathcal{P}/I_\mathcal{P}\otimes_{R_\mathcal{P}} M_\mathcal{P}$
or $R_{\mathcal{P}}/I_\mathcal{P}J_\mathcal{P}~\otimes_{R_\mathcal{P}} M_\mathcal{P}=R_\mathcal{P}/J_\mathcal{P}\otimes_{R_\mathcal{P}} M_\mathcal{P}$.
\end{prf}
\begin{propn}\rm\label{Injcogenerator}
       An injective cogenerator $R$-module which is also $I$-reduced (resp. $(I,J)$-prime) is $I$-coreduced (resp. $(I,J)$-coprime).
   \end{propn}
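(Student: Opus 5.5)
The plan is to use the cogenerator property in the following form: an $R$-module $X$ is zero if and only if $\text{Hom}_R(X,E)=0$. Let $E$ be the injective cogenerator. For the first assertion we want $IE=I^2E$, that is, $X:=IE/I^2E=0$. For the second we want $IJE=IE$ or $IJE=JE$, that is, $IE/IJE=0$ or $JE/IJE=0$. In each case we will show that every homomorphism from the relevant quotient into $E$ vanishes.

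\emph{The $I$-reduced case.} Let $f\colon IE/I^2E\to E$ be a homomorphism, and compose it with the projection to get $f'\colon IE\to E$ with $f'(I^2E)=0$. Since $E$ is injective, $f'$ extends along $IE\hookrightarrow E$ to an endomorphism $g\colon E\to E$. As $R$ is commutative, $g$ is $R$-linear, so $g(KE)=Kg(E)$ for every ideal $K$. From $g(I^2E)=f'(I^2E)=0$ we get $I^2g(E)=0$, so $g(E)\subseteq (0:_E I^2)$. Since $E$ is $I$-reduced, Proposition \ref{Irem} gives $(0:_E I^2)=(0:_E I)$. Hence $Ig(E)=0$, so $f'=g|_{IE}$ vanishes and $f=0$. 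Thus $\text{Hom}_R(IE/I^2E,E)=0$, and the cogenerator property gives $IE=I^2E$, i.e.\ $E$ is $I$-coreduced.

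\emph{The $(I,J)$-prime case.} The argument is the same, but we first need a uniform choice of which of $I$ or $J$ works. By Proposition \ref{IJPropn1}, either $(0:_E IJ)=(0:_E I)$ or $(0:_E IJ)=(0:_E J)$. Suppose the first holds. Take $f\colon IE/IJE\to E$, lift it to $f'\colon IE\to E$ killing $IJE$, and extend it to $g\in\text{End}_R(E)$. Then $IJg(E)=g(IJE)=0$, so $g(E)\subseteq (0:_E IJ)=(0:_E I)$. Hence $f'(IE)=Ig(E)=0$ and $f=0$. The cogenerator property gives $IE=IJE$. The second alternative gives $JE=IJE$ by symmetry. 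In either case $E$ is $(I,J)$-coprime.

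The main obstacle is that uniformity. The defining condition of $(I,J)$-prime is elementwise, since each $m$ with $IJm=0$ is killed by $I$ or by $J$, whereas the extension argument needs one ideal working for all of $g(E)$. This is exactly what Proposition \ref{IJPropn1}(2) supplies. If that equivalence needs justification, we would use that $(0:_E IJ)$ is the union of the submodules $(0:_E I)$ and $(0:_E J)$, and a module that is the union of two submodules equals one of them. Beyond that, the proof only uses injectivity to extend maps and commutativity of $R$ to move ideals through endomorphisms.
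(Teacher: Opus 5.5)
Your proof is correct, and it takes a more elementary route than the paper's.

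The paper works at the level of functors. It writes $I$-reducedness as $\text{Hom}_R(R/I,E)=\text{Hom}_R(R/I^2,E)$, applies $\text{Hom}_R(E,-)$ and the Hom-tensor adjunction to get $\text{Hom}_R(E\otimes_R R/I,E)\cong\text{Hom}_R(E\otimes_R R/I^2,E)$, and then uses that $\text{Hom}_R(-,E)$ reflects isomorphisms to conclude $R/I\otimes_R E\cong R/I^2\otimes_R E$. The $(I,J)$ case is handled the same way with $IJ$ in place of $I^2$.

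You instead show directly that $\text{Hom}_R(IE/I^2E,E)=0$. Injectivity lets you extend maps along $IE\hookrightarrow E$, and $I$-reducedness puts the image of any endomorphism killing $I^2E$ inside $(0:_E I^2)=(0:_E I)$. You then need only that a cogenerator detects nonzero modules.

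The underlying mathematics is the same: your computation is what one gets by applying the exact functor $\text{Hom}_R(-,E)$ to $0\to IE/I^2E\to E/I^2E\to E/IE\to 0$. Your version makes two points explicit that the paper leaves implicit.
\begin{itemize}
\item The isomorphism of Hom-groups must be the one induced by the canonical surjection $E/I^2E\to E/IE$. That is what ``reflects isomorphisms'' actually requires, and it is what yields the equality $IE=I^2E$ rather than merely an abstract isomorphism of quotients.
\item In the $(I,J)$ case, one of $I$ or $J$ must work uniformly for all of $E$. You justify this with the union-of-two-submodules argument, whereas the paper relies on Proposition \ref{IJPropn1}, whose proof is left as ``Elementary''.
\end{itemize}
In exchange, the paper's formulation fits its functorial theme and is visibly dual to Proposition \ref{progenerator}.

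A minor remark: $g(KE)=Kg(E)$ holds for any module homomorphism $g$, so commutativity of $R$ is not what is being used at that step.
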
 
    \begin{prf}\rm
    Let $N$ be an injective cogenerator $R$-module which is $I$-reduced. By \cite[Proposition 2.2]{SsevviiriI2024}, $\text{Hom}_ R(R/I,N)\cong \text{Hom}_ R(R/I^2,N)$. By the Hom-tensor adjunction, $\text{Hom}_R(N\otimes_R R/I,N)\cong \text{Hom}_R(N,\text{Hom}_R(R/I,N))\cong \text{Hom}_R(N,\text{Hom}_R(R/I^2,N))\cong \text{Hom}_R(N\otimes_R R/I^2,N) $.  By hypothesis, $N$ is an injective cogenerator. So, $\text{Hom}_ R(-,N)$ reflects isomophisms. Therefore, $R/I \otimes_R N \cong R/I^2 \otimes_R N$ and by  \cite[Proposition 2.3]{SsevviiriI2024}, $N$ is $I$-coreduced. Now suppose that $N$ is an injective cogenerator $R$-module which is $(I,J)$-prime. Let $\text{Hom}_ R(R/IJ,N)\cong \text{Hom}_ R(R/I,N)$. By the Hom-tensor adjunction and for any ideals $I$ and $J$ of $R$ , $\text{Hom}_ R(N\otimes_R R/I,N)\cong \text{Hom}_R(N,\text{Hom}_R(R/I,N))\cong \text{Hom}_R(N,\text{Hom}_R(R/IJ,N))\cong\text{Hom}_ R(N\otimes_R R/IJ,N)$. By hypothesis, $N$ is an injective cogenerator. So, $\text{Hom}_ R(-,N)$ reflects isomorphisms and $R/I \otimes_R N \cong R/IJ~\otimes_R N$. Similarly, if $\text{Hom}_ R(R/IJ,N)\cong \text{Hom}_ R(R/J,N)$, then $R/J \otimes_R N \cong R/IJ \otimes_R N$.
    \end{prf}
    
    \begin{propn}\rm\label{progenerator}
     A progenerator $R$-module which is $I$-coreduced (resp. $(I,J)$-coprime) is also $I$-reduced (resp. $(I,J)$-prime).   
    \end{propn}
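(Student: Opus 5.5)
The plan is to reduce both statements to identities between \emph{ideals} of $R$, using only that a progenerator $P$ is a generator. Once the ideal identities are in hand, the conclusions follow from the elementwise definitions in Definition \ref{locally}.

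\textbf{Step 1: $R$ is a direct summand of $P^n$.} Since $P$ is a generator, there is an epimorphism $P^{(\Lambda)}\to R$. Because $R$ is cyclic, generated by $1$, the preimage of $1$ involves only finitely many coordinates, so the map restricts to an epimorphism $P^n\to R$ for some $n$. This epimorphism splits because $R$ is projective. Hence $P^n\cong R\oplus Q$ for some $R$-module $Q$.

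\textbf{Step 2: transfer the hypothesis to ideals.} For any ideal $K$ of $R$ and modules $A,B$ we have $K(A\oplus B)=KA\oplus KB$ and $K(P^n)=(KP)^n$.
\begin{itemize}
\item If $P$ is $I$-coreduced, then $IP=I^2P$ gives $IP^n=I^2P^n$. Hence $I\oplus IQ=I^2\oplus I^2Q$, and comparing the $R$-components yields $I=I^2$.
\item If $P$ is $(I,J)$-coprime, then either $IJP=IP$ or $IJP=JP$. The same comparison gives $IJ=I$ in the first case and $IJ=J$ in the second.
\end{itemize}
This is the one step needing care. The case of the disjunction is fixed globally by the hypothesis on $P$, so there is no prime-by-prime case ambiguity. (A localization argument using that $P$ is locally free of positive rank would run into exactly that ambiguity.)

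\textbf{Step 3: conclude from the ideal identities.} Each conclusion now holds for every module, in particular for $P$.
\begin{itemize}
\item If $I=I^2$, then $I^2m=0$ means $Im=0$ for any $m\in M$. So every $R$-module is $I$-reduced.
\item If $IJ=I$, then $IJm=0$ gives $Im=0$. If $IJ=J$, then $IJm=0$ gives $Jm=0$. So every $R$-module, in particular $P$, is $(I,J)$-prime.
\end{itemize}

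Projectivity of $P$ enters only through the splitting in Step 1, and even there only the projectivity of $R$ is used. I expect the main obstacle to be Step 1, the passage from ``generator'' to a \emph{finite} direct sum containing $R$ as a summand. It is handled by the finite generation of $R$ together with the projectivity of $R$.

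As a cross-check in the style of Proposition \ref{Injcogenerator}, one could alternatively argue via the equivalence $\text{Hom}_R(P,-)$ and Propositions \ref{Irem} and \ref{Icorem}. The direct-summand argument is simpler, so I would follow it.
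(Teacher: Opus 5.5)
Your proof is correct, and it takes a genuinely different route from the paper's.

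\textbf{The paper's route.} It argues categorically, in parallel with Proposition \ref{Injcogenerator}. Starting from $R/I\otimes_R P\cong R/I^2\otimes_R P$, it uses Hom-tensor adjunction to obtain $\text{Hom}_R(P,\text{Hom}_R(R/I,P))\cong\text{Hom}_R(P,\text{Hom}_R(R/I^2,P))$. It then invokes that $\text{Hom}_R(P,-)$ reflects isomorphisms, concluding $(0:_P I)=(0:_P I^2)$, and argues likewise with $IJ$ in place of $I^2$.

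\textbf{Your route.} You push the hypothesis down to the summand $R$ of $P^n$. This shows the hypothesis already forces the ideal identities $I=I^2$ (resp.\ $IJ=I$ or $IJ=J$). After that, every $R$-module, not just $P$, is $I$-reduced (resp.\ $(I,J)$-prime).

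\textbf{What your route buys.}
\begin{itemize}
\item It is elementary and uses only that $P$ is a generator; projectivity and finite generation of $P$ play no role.
\item It shows the proposition is really a statement about the ideals, and is degenerate for generators.
\item It sidesteps a point the paper's written argument glosses over. An abstract isomorphism $\text{Hom}_R(R/I,P)\cong\text{Hom}_R(R/I^2,P)$ does not by itself give equality of the submodules $(0:_PI)\subseteq(0:_PI^2)$, which is what Proposition \ref{Irem} needs. To repair this, one uses naturality of the adjunction to see that the composite isomorphism is $\text{Hom}_R(P,\iota)$ for the inclusion $\iota$. Then faithfulness of $\text{Hom}_R(P,-)$ (again only the generator property) shows $\iota$ is an isomorphism.
\end{itemize}

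\textbf{What the paper's route buys.} It keeps the formal duality with the injective-cogenerator statement, which is the organizing theme of that section.
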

    \begin{prf}\rm
    Let $M$ be a  progenerator $R$-module which is $I$-coreduced. Suppose that  $R/I \otimes_R M \cong R/I^2 \otimes_R M$. By the Hom-tensor adjunction we have,$\\\text{Hom}_R(M,\text{Hom}_R(R/I,M))\cong \text{Hom}_ R(M\otimes_R R/I,M)\cong \text{Hom}_ R(M\otimes_R R/I^2,M)\\\cong \text{Hom}_R(M,\text{Hom}_R(R/I^2,M))$. By hypothesis, $M$ is a progenerator. So, \\$\text{Hom}_ R(M,-)$ reflects isomorphisms and $\text{Hom}_ R(R/I,M)\cong \text{Hom}_ R(R/I^2,M)$. Now suppose that $M$ is a  progenerator $R$-module which is $(I,J)$-coprime.
     Let $R/I \otimes_R M \cong R/IJ~ \otimes_R M$. By the Hom-tensor adjunction, for any ideals $I$ and $J$ of $R$, \\$\text{Hom}_R(M,\text{Hom}_R(R/I,M))\cong \text{Hom}_ R(M\otimes_R R/I,M)\cong \text{Hom}_ R(M\otimes_R R/IJ,M)\cong \text{Hom}_R(M,\text{Hom}_R(R/IJ,M)) $. By hypothesis, $M$ is a progenerator. So, $\text{Hom}_ R(M,-)$ reflects isomorphisms. Therefore, $\text{Hom}_ R(R/I,M)\cong \text{Hom}_ R(R/IJ,M)$. Similarly, if $R/J \otimes_R M \cong R/IJ~ \otimes_R M$, then $\text{Hom}_ R(R/J,M)\cong \text{Hom}_ R(R/IJ,M)$.
    \end{prf}

\begin{cor}\rm\label{did}
       Each of the following statements holds:
       \begin{enumerate}
           \item If an $R$-module $M$ is coprime and $N$ is any $R$-module, then the $R$-module $\text{Hom}_R(M,N)$ is prime. The converse holds when $N$ is an injective cogenerator.
           \item Let $R$ be a Noetherian ring and $N$ be an injective cogenerator $R$-module. If $M$ is a prime $R$-module, then $\text{Hom}_R(M,N)$ is a coprime $R$-module.
           \item If $M$ is a prime (resp.  weakly prime, coprime and weakly coprime) $R$-module, then for all prime ideals $\mathcal{P}$ of $R$, the $R_{\mathcal{P}}$-module $M_{\mathcal{P}}$ is prime (resp. weakly prime, coprime and weakly coprime).
           \item An injective cogenerator $R$-module which is reduced (resp. weakly prime) is also coreduced (resp. weakly coprime).
           \item A progenerator $R$-module which is coreduced (resp. weakly coprime) is also reduced (resp. weakly prime).
       \end{enumerate}
    \end{cor}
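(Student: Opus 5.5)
Each of the five statements of Corollary \ref{did} comes from the corresponding local result for a fixed ideal or pair of ideals, by quantifying over all ideals of $R$. The dictionary is the one recorded after Definition \ref{locally submodules} and in Proposition \ref{global-cpm}: $M$ is prime (resp. weakly prime, reduced) if and only if it is $I$-prime (resp. $(I,J)$-prime, $I$-reduced) for all ideals $I,J$. Likewise, $M$ is coprime (resp. weakly coprime, coreduced) if and only if it is $I$-coprime (resp. $(I,J)$-coprime, $I$-coreduced) for all $I,J$. Since every local hypothesis and conclusion below is checked ideal by ideal, each global statement follows by applying the local proposition to every ideal.

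For (1), I would use a special case of Proposition \ref{Hom-reversed1}: taking $J=R$, $(I,R)$-prime reduces to "$Im=0$ or $Rm=0$". That is weaker than $I$-prime, so the cleaner route is a direct argument. If $M$ is coprime, then $IM=0$ or $IM=M$. In the first case $\text{Hom}_R(R/I,\text{Hom}_R(M,N))\cong\text{Hom}_R(M/IM,N)=\text{Hom}_R(M,N)$. In the second case it equals $\text{Hom}_R(0,N)=0$. By Proposition \ref{Iprimep1}(4), $\text{Hom}_R(M,N)$ is then $I$-prime for every $I$, hence prime.

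For the converse in (1), I would argue as in Proposition \ref{Hom-reversed1}. Suppose $\text{Hom}_R(M/IM,N)\cong\text{Hom}_R(R/I,\text{Hom}_R(M,N))$ is either $0$ or all of $\text{Hom}_R(M,N)$, the latter via the map induced by $M\to M/IM$. Since $N$ is an injective cogenerator, $\text{Hom}_R(-,N)$ is faithful and exact. So $M/IM=0$ in the first case. In the second case the projection $M\to M/IM$ becomes an isomorphism after applying $\text{Hom}_R(-,N)$, so $IM=0$.

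Statement (2) is the dual computation and follows Proposition \ref{Hom-reversd2}. Over a Noetherian ring every ideal is finitely generated, so $R/I\otimes_R\text{Hom}_R(M,N)\cong\text{Hom}_R(\text{Hom}_R(R/I,M),N)$. That hom-evaluation isomorphism needs $R/I$ finitely presented and $N$ injective. Primeness of $M$ gives $\text{Hom}_R(R/I,M)\in\{0,M\}$, so $R/I\otimes\text{Hom}_R(M,N)$ is $0$ or $\text{Hom}_R(M,N)$, and the latter holds via the natural map. Hence $\text{Hom}_R(M,N)$ is $I$-coprime for every $I$.

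Statement (3) follows from Proposition \ref{tklocalisations}, using that every ideal of $R_{\mathcal P}$ has the form $I_{\mathcal P}$ for an ideal $I$ of $R$, and $(IJ)_{\mathcal P}=I_{\mathcal P}J_{\mathcal P}$. Statements (4) and (5) follow ideal by ideal from Propositions \ref{Injcogenerator} and \ref{progenerator}, combined with the dictionary above.

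I expect the main obstacle to be the "$\cong$ versus $=$" issue that runs through the local propositions. Conditions such as $\text{Hom}_R(R/I,M)\cong M$ must be read as equality of submodules induced by the natural inclusion or projection, not as an abstract isomorphism. When invoking reflection of isomorphisms by $\text{Hom}_R(-,N)$ or $\text{Hom}_R(M,-)$, I would therefore always apply it to the canonical maps, for example $M\otimes R/I^2\to M\otimes R/I$ or $(0:_M I)\hookrightarrow(0:_M I^2)$. I would also check that in (1) the $I=R$ and degenerate cases are consistent with the definitions.
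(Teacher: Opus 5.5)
Your proposal is correct. For (3)--(5) it is exactly the paper's argument: apply Propositions \ref{tklocalisations}, \ref{Injcogenerator} and \ref{progenerator} ideal by ideal and use the local/global dictionary.

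The genuine difference is in (1) and (2). The paper gets these by citing Propositions \ref{Hom-reversed1} and \ref{Hom-reversd2}. Those are statements about $(I,J)$-coprime and $(I,J)$-prime modules, so quantifying over all pairs yields only ``weakly coprime $\Rightarrow$ $\text{Hom}_R(M,N)$ weakly prime'' and ``weakly prime $\Rightarrow$ $\text{Hom}_R(M,N)$ weakly coprime''. These conclusions are strictly weaker than what (1) and (2) assert; for instance $\mathbb{Z}\oplus\mathbb{Z}/p\mathbb{Z}$ is weakly prime but not prime over $\mathbb{Z}$. You instead prove the single-ideal versions directly:
\begin{itemize}
\item for (1), via $(0:_{\text{Hom}_R(M,N)}I)=\{f : f(IM)=0\}\cong\text{Hom}_R(M/IM,N)$, with faithfulness and exactness of $\text{Hom}_R(-,N)$ giving the converse;
\item for (2), via the evaluation isomorphism $R/I\otimes_R\text{Hom}_R(M,N)\cong\text{Hom}_R(\text{Hom}_R(R/I,M),N)$, valid for $R/I$ finitely presented and $N$ injective.
\end{itemize}
This costs a few lines but actually proves what is stated. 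It also shows that (2) uses only injectivity of $N$ and finite generation of ideals, not the cogenerator hypothesis. Your insistence on applying reflection of isomorphisms to canonical maps is what makes the $\cong$-chains in the proofs of Propositions \ref{Injcogenerator} and \ref{progenerator}, and hence your (4) and (5), rigorous.

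One caution, which applies equally to the paper. The $I$-prime and $(I,J)$-prime parts of Proposition \ref{tklocalisations} rest on $\text{Hom}_R(R/I,M)_{\mathcal P}\cong\text{Hom}_{R_{\mathcal P}}(R_{\mathcal P}/I_{\mathcal P},M_{\mathcal P})$, which needs $R/I$ finitely presented. Without that, the $I$-prime part genuinely fails. Take a field $k$ and set
\[
R=k[b,s_1,s_2,\dots,a_1,a_2,\dots]/(s_ia_ib : i\geq 1),\qquad I=(a_1,a_2,\dots),\qquad \mathcal P=(b,a_1,a_2,\dots).
\]
Then $(0:_RI)=\bigcap_i(s_ib)=0$, so $R$ is $I$-prime. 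But each $s_i$ becomes a unit in $R_{\mathcal P}$, so $b/1\neq 0$ is annihilated by $I_{\mathcal P}\neq 0$.

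Statement (3) is nevertheless true over any $R$, and the robust argument is global. $M$ is prime iff $(0:_Rm)=(0:_RM)$ for every $0\neq m\in M$, and weakly prime iff $(0:_Rm)$ is a prime ideal for every $0\neq m\in M$. Since $(0:_{R_{\mathcal P}}m/1)=(0:_Rm)_{\mathcal P}$, both conditions pass to $M_{\mathcal P}$. The coprime cases of Proposition \ref{tklocalisations} are unproblematic, as $(IM)_{\mathcal P}=I_{\mathcal P}M_{\mathcal P}$.
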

 \begin{prf}\rm
 This follows from Propositions \ref{Hom-reversed1}, \ref{Hom-reversd2}, \ref{tklocalisations}, \ref{Injcogenerator} and \ref{progenerator} respectively and the fact that modules are globally prime if and only if they are locally prime for each ideal of the ring $R$.
 \end{prf}

 \begin{rem}\rm 
Corollary \ref{did} retrieves some of the results that appear in \cite[Theorem 3.1]{Yassemi2001}.
 \end{rem}    

 \subsection{Conclusion}
 \begin{paragraph}\noi
  Whereas this paper mainly studies locally prime modules, we have also demonstrated that locally prime modules are a pathway to understanding globally prime modules. In addition, it is easy to see that the definitions of the locally prime modules allow for natural extensions of the notions of primeness to other categories. These categories include: the category of chain complexes of $R$-modules \text{Ch}$(R)$, the category of groups, the derived category \text{D}$(R)$ of $R$-modules, and the category of $G$-sets for some group  $G$.  In general, they provide a way to extend and study the notion of primeness to categories that are defined with an action by some  other structure. For the group of $G$, the action is on itself.
     On the other hand there is a disadvantage working with locally prime modules. They are rigid, not as flexible as their global counterparts which permit use of arbitrary ideals. Accordingly, several phenomena that work with global primes  cannot work with locally prime modules. These include the inability to develop a theory of locally prime spectrum of modules as is the case for the global prime modules, see \cite{Lu1999,McCasland1997,Parsa2024,Tekir2005}.
\end{paragraph}
\begin{qn}\rm
    Can one define functors analogous to $\Gamma_I$ and $\Lambda_I$ for a pair of ideals $(I,J)$ of a ring $R$ such that an adjunction exists between the subcategories of $(I,J)$-prime $R$-modules and $(I,J)$-coprime $R$-modules?
\end{qn}

\section*{Acknowledgement}
   \begin{paragraph}\noi
The first author would like to acknowledge the University of Dodoma for the financial  support during her stay at Makerere University. The second author was supported by the International Science Programme through the Eastern Africa Algebra Research Group.  
   \end{paragraph}
     
 \section*{Disclosure of interest} 
 \begin{paragraph}\noi 
   The authors report there are no competing interests to declare.       
 \end{paragraph}

\renewcommand{\bibname}{References}
\nocite{*}
\bibliographystyle{plain}
\bibliography{bib} 
\end{document}